\theoremstyle{plain}
\newtheorem{theorem}{Theorem}
\newtheorem{lemma}{Lemma}
\newtheorem{claim}{Claim}
\newtheorem{conj}{Conjecture}
\newtheorem*{invariant*}{Invariant}
\newcommand\blfootnote[1]{%
  \begingroup
  \renewcommand\thefootnote{}\footnote{#1}%
  \addtocounter{footnote}{-1}%
  \endgroup
}
\date{}
\title{Total domination in plane triangulations
}
\author{M. Claverol\thanks{{\tt merce.claverol@upc.edu}. Universitat Polit\`{e}cnica de Catalunya, Spain.}
\and A. Garc\'\i a\thanks{{\tt olaverri@unizar.es}. IUMA, Universidad de Zaragoza, Spain.}
\and G. Hern\'andez\thanks{{\tt gregorio@fi.upm.es}. Universidad Polit\'ecnica de Madrid, Spain.}
\and C. Hernando \thanks{{\tt carmen.hernando@upc.edu}. Universitat Polit\`{e}cnica de Catalunya, Spain.}
\and M. Maureso\thanks{{\tt montserrat.maureso@upc.edu}. Universitat Polit\`{e}cnica de Catalunya, Spain.}
\and M. Mora \thanks{{\tt merce.mora@upc.edu}. Universitat Polit\`{e}cnica de Catalunya, Spain.}
\and J. Tejel\thanks{{\tt jtejel@unizar.es}. IUMA, Universidad de Zaragoza, Spain.}}
\begin{document}

\maketitle

\blfootnote{\begin{minipage}[l]{0.3\textwidth} \includegraphics[trim=10cm 6cm 10cm 5cm,clip,scale=0.15]{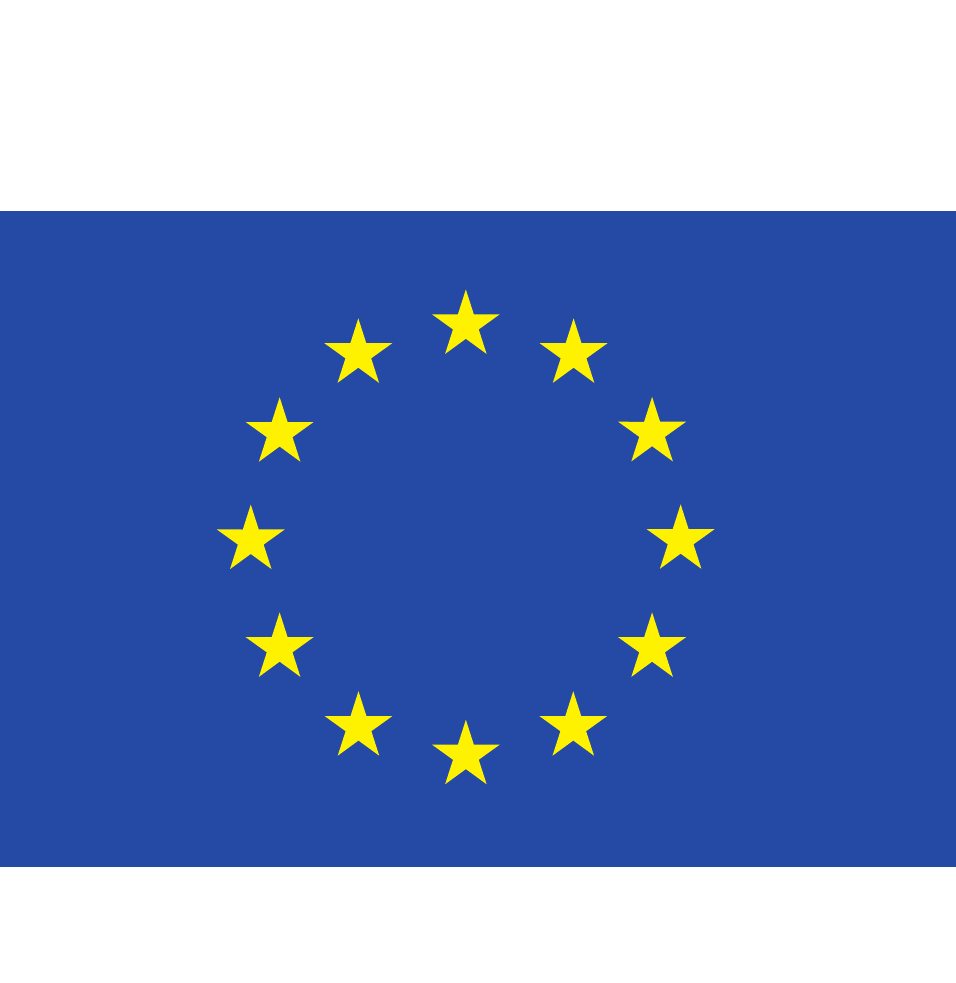} \end{minipage}  \hspace{-2cm} \begin{minipage}[l][1cm]{0.7\textwidth}
 	  This project has received funding from the European Union's Horizon 2020 research and innovation programme under the Marie Sk\l{}odowska-Curie grant agreement No 734922.
 	\end{minipage}}

\begin{abstract}
A total dominating set of a graph $G=(V,E)$ is a subset $D$ of $V$ such that every vertex in $V$ is adjacent to at least one vertex in $D$. The total domination number of $G$, denoted by $\gamma _t (G)$, is the minimum cardinality of a total dominating set of $G$. A near-triangulation is a biconnected planar graph that admits a plane embedding such that all of its faces are triangles except possibly the outer face. We show in this paper that $\gamma _t (G) \le \lfloor \frac{2n}{5}\rfloor$ for any near-triangulation $G$ of order $n\ge 5$, with two  exceptions.
\end{abstract}

\section{Introduction}

Let $G=(V,E)$ be a simple graph. A {\em dominating set} of $G$ is a subset $D\subseteq V$ such that every vertex not in $D$ is adjacent to at least one vertex in $D$. The {\em domination number} of $G$, denoted by $\gamma (G)$, is defined as the minimum cardinality of a dominating set of $G$. Total dominating sets are defined in a similar way.
A subset $D\subseteq V$ such that every vertex in $V$ (including the vertices in $D$) is adjacent to a vertex in $D$ is called a {\em total dominating set} ({\em TDS} for short) of $G$. The {\em total domination number}, denoted by $\gamma _t (G)$, is the minimum cardinality of a total dominating set of $G$. Since a total dominating set of a graph $G$ is also a dominating set of $G$, the following inequality trivially holds $\gamma (G)\le \gamma _t (G)\le 2 \gamma (G)$.

Domination and total domination in graphs have been widely studied in the literature. We refer the reader to~\cite{Haynes98, Haynes98bis, Henning13} for excellent books on these topics and to~\cite{Henning09} for a survey on total domination.

\begin{figure}[htb]
\centering
\includegraphics[scale=0.56,page=1]{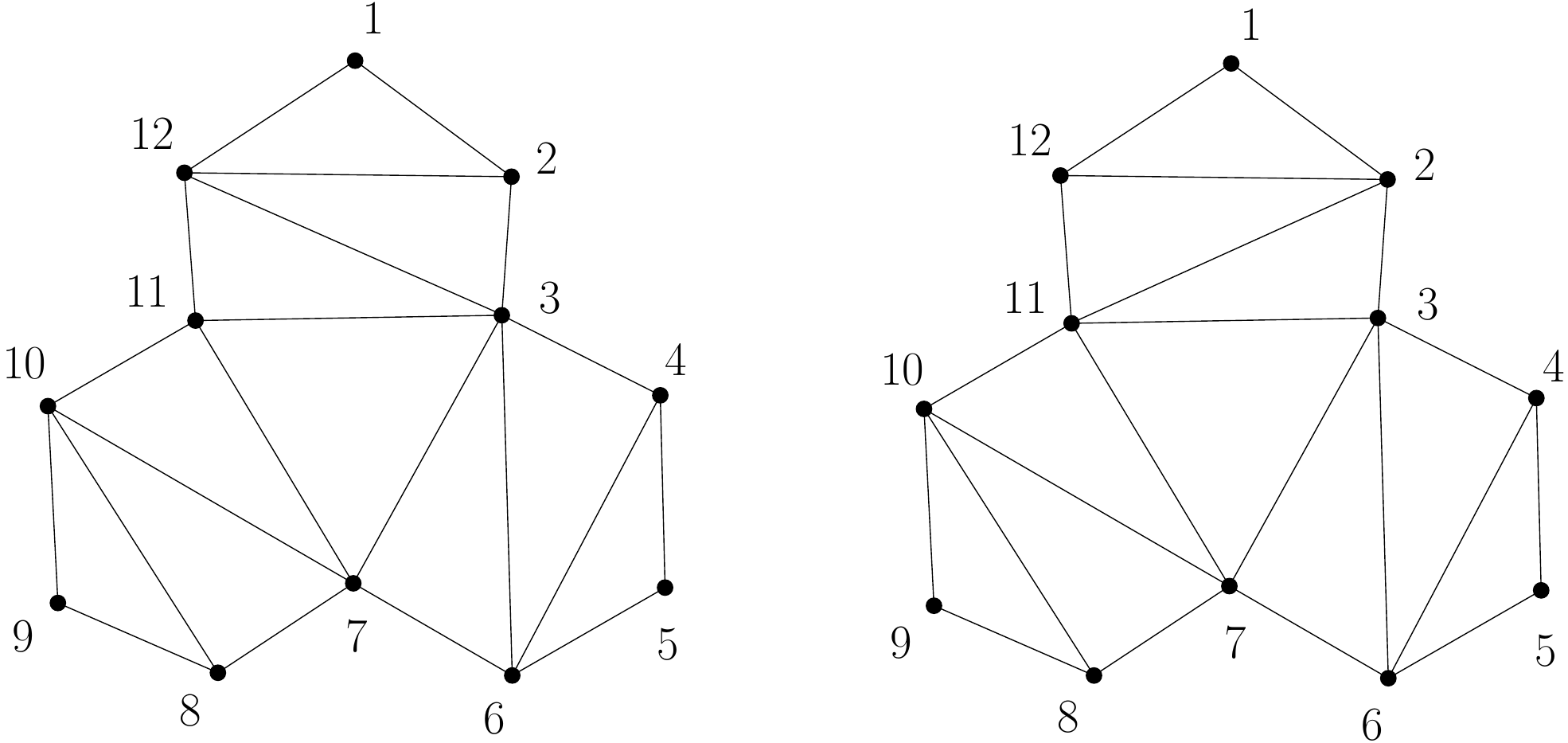}
\caption{The two 12-vertex graphs $H_1$ and $H_2$}\label{fig:ExceptionCases}
\end{figure}

Given a graph $G$, it is well-known that computing $\gamma (G)$ or $\gamma_t(G)$ is an NP-hard problem, even when restricted to planar graphs.
Hence, studying lower or upper bounds on the (total) domination number in some classes of graphs has been of interest during the last few years. In particular, for planar graphs, Matheson and Tarjan proved in~\cite{Matheson96} that $\gamma (G) \le \lfloor \frac{n}{3}\rfloor$ for any $n$-vertex triangulated disc $G$. In the literature, triangulated discs are also called near-triangulations. A {\em near-triangulation} is a biconnected planar graph that has a plane embedding such that all of its faces are triangles except possibly the outer face.  When the outer face is also a triangle, a near-triangulation is a {\em triangulation} or maximal planar graph. Note that a near-triangulation can be obtained by removing one vertex from a triangulation.

In the same paper~\cite{Matheson96}, it is also conjectured that $\gamma (G) \le \lfloor \frac{n}{4}\rfloor$ for any $n$-vertex triangulation $G$. King and Pelsmajer proved this conjecture in~\cite{King10} for triangulations of maximum degree $6$, and Plummer et al. proved in~\cite{Plummer16} that if $G$ is an $n$-vertex Hamiltonian triangulation with minimum degree at least 4, then $\gamma (G) \le \max \{\lceil 2n/7\rceil, \lfloor 5n/16\rfloor\}$. The upper bound $\frac{n}{3}$ for triangulations has been recently improved by \v{S}pacapan~\cite{Spacapan18}, showing that $\gamma (G) \le \frac{17}{53}n$ for any $n$-vertex triangulation $G$.

Maximal outerplanar graphs are a special class of near-triangulations. A {\em maximal outerplanar graph}, MOP for short, is a near-triangulation such that all of its vertices belong to the boundary of the outer face.
MOPs have additional properties that allow one to improve (or to prove) bounds for different types of problems on graphs. In~\cite{Matheson96}, in addition to proving that $\gamma (G) \le \frac{n}{3}$ for any $n$-vertex planar graph $G$, it is proved that this upper bound is tight for MOPs. In fact, the upper bound $\frac{n}{3}$ on the domination number
in MOPs was already implicitly proved by Fisk~\cite{Fisk78}.
In~\cite{Campos13, Tokunaga13}, it is shown  that $\gamma (G) \le (n+k)/4$, where $k$ is the number of vertices of degree 2 in a MOP $G$.
Dorfling et al. proved in~\cite{Dorfling17} that $\gamma _t (G) \le \frac {n+k}{3}$ for a MOP $G$ of order $n$ with $k$ vertices of degree 2. The same authors proved in~\cite{Dorfling16} that apart from the graphs $H_1$ and $H_2$ shown in Figure~\ref{fig:ExceptionCases}, $\gamma _t (G) \le \lfloor \frac {2n}{5} \rfloor$ for a MOP $G$ of order $n\ge 5$. In~\cite{Lemanska17}, Lemanska et al. presented an alternative proof of this last result. The reader is referred to~\cite{Canales16, Canales18, Claverol19, Henning19, Lemanska19} for other results in MOPs related to some variants of the domination concept.

In this paper, we extend the result proved in~\cite{Dorfling16, Lemanska17} to the family of near-triangulations and we show that $\gamma_t(G) \le \lfloor \frac {2n}{5} \rfloor$ for any near-triangulation $G$ of order $n\ge 5$, apart from the graphs $H_1$ and $H_2$. Thus, we improve the best known upper bound $\frac{6}{11}n$ on the total domination number of $n$-vertex near-triangulations. This last bound follows from the fact that a near-triangulation is 2-connected and from the following result proved in~\cite{Henning09bis}: If $G$ is a 2-connected graph of order $n>18$, then $\gamma_t (G) \le \frac{6}{11}n$.

The upper bound $\lfloor \frac {2n}{5} \rfloor$ on the total domination number in near-triangulations is proved in Section~\ref{sec:bound}. The proof is based on induction and combines common techniques used when proving results for MOPs, as the ones described in~\cite{Lemanska17}, with techniques related to what we call {\em reducible} and {\em irreducible} near-triangulations, and {\em terminal polygons} in irreducible near-triangulations. These concepts are defined in Section~\ref{sec:bound}. In the induction process, the two exception graphs $H_1$ and $H_2$ can appear after removing some vertices or some edges from a near-triangulation. For these two graphs, induction cannot be applied since their total domination numbers are greater than $\lfloor \frac {2n}{5} \rfloor$. For this reason, we explain in Section~\ref{sec:dominating} how to obtain suitable total dominating sets for some graphs involving $H_1$ and $H_2$ that will be used in the inductive proof.
Section~\ref{sec:near} is devoted to review some known properties for near-triangulations, and to show some special cases in which the removal of some vertices or the contraction of some edges from a near-triangulation, results in another near-triangulation. These cases will be needed in the inductive proof. We conclude the paper with some remarks in Section~\ref{sec:con}.

\section{Near-triangulations and some of their properties}\label{sec:near}

For the sake of simplicity, throughout the paper the term near-triangulation will refer to a near-triangulation $T=(V,E)$ that has been drawn in the plane without crossings, using straight-line segments, such that all of its faces are triangles except possibly the outer face (see Figure~\ref{fig:NearTriangulationa}). Such a drawing always exists by F\'ary's Theorem~\cite{Fary48}. We assume that the boundary of the outer face is given by the cycle $C=(u_1,u_2,\ldots ,u_h,u_1)$, with its $h\ge 3$ vertices in clockwise order. In this way, we can refer to boundary edges and vertices (the edges and vertices of $C$), interior vertices (the vertices not in $C$), and diagonals (edges connecting two non-consecutive vertices of $C$). Recall that if $h=3$, then $T$ is a triangulation and if $h=|V|$, then $T$ is a MOP.

In~\cite{Lemanska17}, the authors use induction to prove that $\gamma_t(T) \le \lfloor \frac {2n}{5} \rfloor$ for a MOP $T$ of order $n\ge 21$. The two main properties they use are that after contracting a boundary edge of $T$, the resulting graph is again a MOP, and that there is always a diagonal dividing $T$ into two MOPs, leaving $5,6,7$ or $8$ consecutive boundary edges of $C$ in the smallest one. However, these two properties are not true for arbitrary near-triangulations. Sometimes, there are no diagonals dividing a near-triangulation $T$ into smaller near-triangulations, and even in the case that such diagonals exist, a diagonal leaving $5,6,7$ or $8$ consecutive boundary edges in the smallest near-triangulation cannot be chosen. Besides, in general, after contracting a boundary edge, the resulting graph is not a near-triangulation. Therefore, we cannot follow in our inductive proof the same steps as described in~\cite{Lemanska17}, although we will use some of the ideas given in that paper.

\begin{figure}[ht]
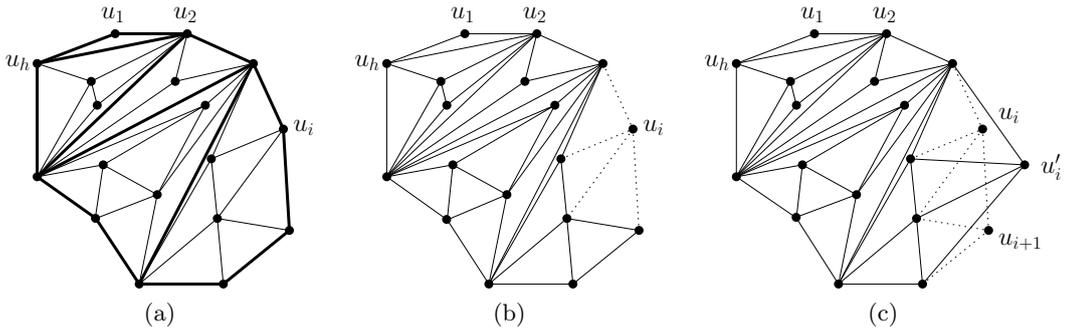

	\centering
	\subfloat[]{
		\includegraphics[scale=0.56,page=2]{img.pdf}
		\label{fig:NearTriangulationa}
	}~
	\subfloat[]{
		\includegraphics[scale=0.56,page=3]{img.pdf}
		\label{fig:NearTriangulationb}
	}~
	\subfloat[]{
		\includegraphics[scale=0.56,page=4]{img.pdf}
		\label{fig:NearTriangulationc}
	}
	\caption{
		(a) A near-triangulation. The thick segments correspond to $T[C]$.
		(b) Removing a vertex of degree 2 in $T[C]$.
        (c) Contracting the edge $(u_i,u_{i+1})$ to the vertex $u'_i$.
	}
	\label{fig:NearTriangulations}
\end{figure}

We show in this section several cases in which the removal of some vertices or the contraction of some edges from a near-triangulation results in another near-triangulation. These cases will be enough for our purposes. Before stating them, we shall give some terminology and some properties.

Given a near-triangulation $T$ with boundary cycle $C=(u_1,u_2,\ldots ,u_h,u_1)$, we use $T[C]$ to denote the subgraph of $T$ induced by the vertices in $C$ (see Figure~\ref{fig:NearTriangulationa}). Observe that $T[C]$ is always Hamiltonian and outerplane (all the vertices belong to the boundary of the outer face). The following result for a Hamiltonian outerplanar graph is well-known.

\begin{lemma}\label{lem:grado2}
Let $G$ be a Hamiltonian outerplanar graph of order $n\ge 4$. Then, $G$ contains at least two non-adjacent vertices of degree 2.
\end{lemma}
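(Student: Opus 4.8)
The plan is to use the well-known fact that a Hamiltonian outerplanar graph is, up to its embedding, a convex polygon whose interior is subdivided by pairwise non-crossing chords. First I would fix the Hamiltonian cycle $C=(v_1,\ldots,v_n,v_1)$ as the boundary of the outer face; every other edge is then a chord joining two non-consecutive vertices, and outerplanarity forces these chords to be pairwise non-crossing. A vertex has degree $2$ precisely when it is incident to no chord, so the goal becomes: locate two chord-free vertices that are neither consecutive on $C$ nor joined by a chord (the latter being automatic for chord-free vertices).

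If $G$ has no chords at all, then every vertex has degree $2$ and, since $n\ge 4$, the vertices $v_1$ and $v_3$ are non-adjacent and of degree $2$, settling this case. Otherwise, the $c\ge 1$ chords partition the interior of the polygon into $c+1$ regions. I would form the weak dual whose nodes are these regions and whose edges join two regions sharing a chord; because the chords are non-crossing this dual is connected with $c+1$ nodes and $c$ edges, hence a tree. A tree with at least two nodes has at least two leaves, so there are (at least) two regions each bounded by exactly one chord together with a consecutive arc of $C$ --- two ``ears''.

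Next I would analyse a single ear bounded by a chord $(v_i,v_j)$ and the boundary path $v_i,v_{i+1},\ldots,v_j$. No chord lies inside this ear (it is a region of the subdivision), and no chord can join an interior vertex $v_m$ (with $i<m<j$) to a vertex outside the arc without crossing $(v_i,v_j)$; hence every such $v_m$ is chord-free and therefore has degree $2$. Since a chord joins non-consecutive vertices we have $j\ge i+2$, so each ear supplies at least one degree-$2$ vertex, while its two endpoints $v_i,v_j$ have degree at least $3$. Choosing one degree-$2$ vertex from each of two distinct leaf-ears then produces the required pair.

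The one point that needs real care --- and the step I expect to be the main obstacle --- is verifying \emph{non-adjacency} of the two chosen vertices, not merely their existence. Here I would use that the degree-$2$ vertices of an ear lie strictly between its two chord endpoints, which have degree at least $3$. Consequently the degree-$2$ vertices contributed by two different leaf-ears are separated along $C$ by at least one chord-endpoint vertex, so they cannot be consecutive; and being chord-free they are joined by no chord. Thus they are non-adjacent, completing the argument. I would double-check the borderline configurations (a single chord giving two ears, and two ears sharing an endpoint, as in a fan triangulation) to confirm the separation argument leaves no gap.
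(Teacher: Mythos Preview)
The paper does not actually prove this lemma: it is stated as ``well-known'' and no argument is supplied. Your proof is correct and is exactly the standard one --- represent the graph as a polygon with non-crossing chords, take the weak dual tree of the resulting subdivision, and extract a degree-$2$ vertex from each of two leaf regions; your separation argument via the chord endpoints correctly establishes non-adjacency, including in the borderline cases you flag (single chord, shared endpoints).
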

Given a graph $G=(V,E)$, the graph obtained from $G$ by deleting the vertices $\{v_1, \ldots , v_k\}$ and all their incident edges is denoted by $G-\{v_1, \ldots , v_k\}$.
It is straightforward to prove the following lemma for near-triangulations (see Figure~\ref{fig:NearTriangulationb}).

\begin{lemma}\label{lem:deletion}
Let $T$ be a near-triangulation of order $n\ge 4$ with boundary cycle $C$. Then, $T-\{ v\} $ is a near-triangulation if and only if $v$ is an interior vertex of degree 3 or $v$ is a vertex of degree 2 in $T[C]$.
\end{lemma}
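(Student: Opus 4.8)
The statement is a biconditional, so the plan is to verify each direction by inspecting the local structure of $T$ around $v$, namely the cyclic list of its neighbors (its \emph{link}). I would prove the ``if'' direction directly and the ``only if'' direction by contraposition.

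For the ``if'' direction there are two cases. If $v$ is an interior vertex of degree $3$, its three neighbors $a,b,c$ are pairwise adjacent: consecutive neighbors are joined by an edge since every face incident to $v$ is a triangle, and with only three neighbors the link closes up as the triangle $abc$. Deleting $v$ therefore merges the three triangular faces $vab$, $vbc$, $vca$ into the single triangular face $abc$, leaving every other face, and the boundary cycle $C$, untouched; the result is a triangulated disc and hence a near-triangulation. If $v=u_i$ is a boundary vertex of degree $2$ in $T[C]$, then $v$ has no incident diagonal, so all its neighbors other than $u_{i-1}$ and $u_{i+1}$ are interior vertices; listing them in rotational order as $u_{i-1}=w_0,w_1,\ldots,w_k=u_{i+1}$, the faces incident to $v$ form the fan of triangles $vw_jw_{j+1}$. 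Deleting $v$ merges this fan with the outer face, and the new outer boundary is obtained from $C$ by replacing $u_i$ with the path $w_1,\ldots,w_{k-1}$. Since these vertices are interior in $T$, they are distinct from all remaining boundary vertices and from one another, so the new boundary is a simple cycle while all bounded faces remain triangles; again $T-\{v\}$ is a near-triangulation.

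For the ``only if'' direction I would assume $v$ is neither of the two types and exhibit an obstruction. An interior vertex always has degree at least $3$, so the remaining interior case is $\deg(v)=d\ge 4$: here the link of $v$ is a cycle $w_1\cdots w_d$ of length $d$, and deleting $v$ replaces the $d$ triangles around it by a single bounded face bounded by this $d$-gon, which is not a triangle, so $T-\{v\}$ is not a near-triangulation. The remaining boundary case is that $v=u_i$ has an incident diagonal $u_iu_m$. This diagonal splits $T$ into two near-triangulations whose only common vertices are $u_i$ and $u_m$, each containing at least one further boundary vertex since $u_m$ is not consecutive to $u_i$. Deleting $u_i$ then leaves $u_m$ as the unique vertex joining the two sides, i.e. a cut vertex, so $T-\{v\}$ fails to be biconnected and is not a near-triangulation.

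The routine verifications above rest on two structural facts that I would state carefully: that the neighbors of a vertex of a near-triangulation, read in rotational order, form a cycle (interior vertex) or a path (boundary vertex) whose consecutive vertices are adjacent; and that a plane graph whose outer face is bounded by a cycle and all of whose bounded faces are triangles is biconnected. The main point requiring attention is the boundary case with a diagonal: one must argue via planarity that a diagonal $u_iu_m$ genuinely separates $T$ into two pieces meeting only in $\{u_i,u_m\}$, so that deleting $u_i$ creates the cut vertex $u_m$. This is the only place where $2$-connectivity, rather than a purely face-counting argument, is essential.
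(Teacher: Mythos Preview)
Your argument is correct in both directions: the link analysis handles the ``if'' part cleanly, and for the ``only if'' part the two obstructions you identify---a non-triangular bounded face when an interior vertex of degree $\ge 4$ is deleted, and a cut vertex $u_m$ when a boundary vertex incident to a diagonal $u_iu_m$ is deleted---are exactly the right ones. The paper itself gives no proof of this lemma, stating only that it is straightforward and pointing to a figure; your write-up is therefore a complete elaboration of what the authors left implicit, and the structural facts you single out (the link of a vertex is a path or cycle of adjacent vertices; a plane graph whose every face is bounded by a cycle is $2$-connected) are precisely the ingredients one needs.
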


Let $G=(V,E)$ be a graph and let $e=(v_i,v_j)$ be an edge of $G$. We use $G - e$ to denote the graph obtained from $G$ by removing $e$, and $G / e$ to denote the graph obtained from $G$ by contracting the edge $e$, that is, the simple graph obtained from $G$ by deleting $v_i, v_j$ and all their incident edges, adding a new vertex $w$ and connecting $w$ to each vertex $v$ that is adjacent to either $v_i$ or $v_j$ in $G$ (see Figure~\ref{fig:NearTriangulationc}). Observe that by Euler's formula, contracting an edge $e=(v_i,v_j)$ from a triangulation $T$ results in another triangulation if and only if $v_i$ and $v_j$ have exactly two common neighbors. Besides, the two endpoints of an edge $e=(v_i,v_j)$ of $T$ have exactly two common neighbors if and only if the edge $e$ is not an edge of a {\em separating triangle} (a triangle containing vertices inside and outside).

We say that an edge $e$ of a near-triangulation $T$ is {\em contractible} if the graph $T / e$ is also a near-triangulation. Since by adding a vertex $w$ in the outer face of $T$ and by connecting $w$ to the vertices in $C$ (the boundary cycle associated with $T$) we obtain a triangulation, then we have the following lemma.

\begin{lemma}\label{lem:contraction}
Let $T$ be a near-triangulation with boundary cycle $C$ and let $e$ be an edge of $T$. Then, the edge $e$ is contractible if and only if $e$ is neither a diagonal of $T$ nor an edge of a separating triangle of $T$.
\end{lemma}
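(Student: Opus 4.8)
The plan is to reduce the statement to the triangulation fact recalled just before the lemma: an edge of a triangulation is contractible (i.e., its contraction yields a triangulation) exactly when it lies in no separating triangle. The first move is to dispose of the case $h=3$. Then $T$ is itself a triangulation, it has no diagonals, and the assertion is precisely the quoted triangulation fact together with the observation that a triangulation is a near-triangulation (and that, when $e$ lies in a separating triangle, the contraction is not even a near-triangulation, as handled below). So from now on I assume $h\ge 4$, which guarantees that the boundary cycle $C$ is not a triangle. I then introduce the apex graph $T^\ast$, obtained by adding a vertex $w$ in the outer face and joining it to every vertex of $C$; as noted in the excerpt, $T^\ast$ is a triangulation, and $T^\ast-w=T$. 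Since $e$ is an edge of $T$, neither endpoint of $e$ is $w$, so contraction and deletion of $w$ commute: $(T^\ast/e)-w=(T^\ast-w)/e=T/e$. I will use repeatedly that removing a single vertex from a triangulation yields a near-triangulation (as recalled in the introduction), so that whenever $T^\ast/e$ is a triangulation, $T/e=(T^\ast/e)-w$ is automatically a near-triangulation.

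For the direction ``$\Leftarrow$'', suppose $e$ is neither a diagonal nor an edge of a separating triangle of $T$; I claim $e$ lies in no separating triangle of $T^\ast$. Because $h\ge 4$ the cycle $C$ is not a triangle, so every triangle of $T$ is either an interior face or a separating triangle; hence every triangle of $T$ through $e$ is an interior face (none is separating, by hypothesis), and it persists as a face of $T^\ast$. Any triangle of $T^\ast$ through $e$ that uses $w$ must have the form $w\,u_k\,u_{k+1}$ with $e=(u_k,u_{k+1})$ a boundary edge, which is one of the fan faces of $T^\ast$ and again not separating. By the quoted triangulation fact, $T^\ast/e$ is therefore a triangulation, and so $T/e=(T^\ast/e)-w$ is a near-triangulation; that is, $e$ is contractible.

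For the direction ``$\Rightarrow$'' I show that each forbidden configuration obstructs the contraction. If $e=(u_i,u_j)$ is a diagonal, then $u_i$ and $u_j$ are non-consecutive, so each of the two arcs of $C$ joining them contains a further vertex; contracting $e$ identifies $u_i$ and $u_j$ into a single vertex $w'$ through which the two sides communicate, making $w'$ a cut vertex, so $T/e$ is not biconnected and hence not a near-triangulation. If instead $e=(v_i,v_j)$ lies in a separating triangle $\sigma=v_iv_jc$, then $c$ is a common neighbour of $v_i$ and $v_j$ realised by two edges $v_ic,v_jc$ that do not bound the face on the side of $e$ away from the interior of $\sigma$; contracting $e$ turns these into a double edge $w'c$, and the region it encloses contains the (nonempty) set of vertices lying strictly inside $\sigma$. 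Simplifying to a simple graph then forces a bounded face of length greater than three, since the trapped interior cannot be triangulated against a single edge $w'c$, so $T/e$ is again not a near-triangulation.

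The main obstacle is precisely this interaction with the outer face: the tempting equivalence ``$T/e$ is a near-triangulation iff $T^\ast/e$ is a triangulation'' is \emph{false} when $h=3$, because there the outer triangle $C$ becomes separating in $T^\ast$ even though its third vertex is a harmless repeated neighbour. For instance, contracting a boundary edge of $K_4$ produces the near-triangulation $K_3$, yet the corresponding $T^\ast/e$ is not a triangulation. This is why triangulations must be handled directly and why, for $h\ge 4$, one must verify carefully that the only triangles gained in passing to $T^\ast$ are the non-separating fan faces. The separating-triangle obstruction in the ``$\Rightarrow$'' direction is the single genuinely local verification; everything else is bookkeeping around the apex $w$.
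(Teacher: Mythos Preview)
Your proof is correct and follows the same approach as the paper, reducing to the known triangulation fact via the apex vertex $w$ placed in the outer face. The paper gives only the one-sentence hint about adding $w$; you supply the details, including the $h=3$ case split and the direct arguments for the forward direction, all of which are in line with the intended reading of the lemma.
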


The following lemma summarizes some of the cases in which we obtain new near-triangulations after removing vertices from a near-triangulation.

\begin{lemma}\label{lem:RemovingPoints}
Let $T$ be a near-triangulation with boundary cycle $C=(u_1,\ldots ,u_h,u_1)$. Suppose that $T$ contains at least one interior vertex and has no diagonals. Let $u_i$ be a vertex in $C$. Then:
\begin{itemize}
  \item[i)] $T-\{ u_i\} $ is also a near-triangulation.
  \item[ii)] Assuming that $T$ contains at least two interior vertices, there exists a vertex $u_j$ with $i\le j<i-1+h$ (mod $h$) and an interior vertex $v_j$ adjacent to $u_j$ such that $T-\{ u_i,u_{i+1},\ldots , u_j,v_j\} $ is a near-triangulation.       In addition, the edge $(u_j,v_j)$ is contractible in $T$.
  \item[iii)] If the edge $e_i=(u_{i-1},u_i)$ is not contractible in $T$, then there exists an interior vertex $v_i$ adjacent to $u_i$ such that $T-\{ u_i,v_i\}$ is a near-triangulation.
\end{itemize}
\end{lemma}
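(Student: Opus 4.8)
The plan is to dispatch the three parts with three tools already available: Lemma~\ref{lem:deletion} (deleting one vertex preserves the near-triangulation property exactly when it is an interior vertex of degree $3$ or a degree-$2$ vertex of $T[C]$), Lemma~\ref{lem:contraction} (an edge is contractible iff it is neither a diagonal nor an edge of a separating triangle), and Lemma~\ref{lem:grado2} (a Hamiltonian outerplanar graph of order at least $4$ has two non-adjacent vertices of degree $2$); throughout I use the standing hypothesis that $T$ has no diagonals, so that $T[C]$ is exactly the cycle $C$ and every $u_k$ has degree $2$ in $T[C]$. Part i) is then immediate from Lemma~\ref{lem:deletion}: since $T$ has an interior vertex and $h\ge 3$ boundary vertices, its order is $n\ge 4$, and $u_i$ has degree $2$ in $T[C]$.

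For part iii) I would first rewrite the hypothesis: as $e_i$ is a boundary edge (not a diagonal), Lemma~\ref{lem:contraction} forces $e_i$ to lie on a separating triangle $(u_{i-1},u_i,x)$ with $x$ an interior common neighbour of $u_{i-1}$ and $u_i$. Let $R$ be the closed region bounded by this triangle. Inside $R$ the neighbours of $u_i$ form a fan, and deleting the faces incident to $u_i$ leaves a triangulated disc whose boundary is a cycle $\sigma=(u_{i-1},q_0,\dots,q_t,x)$, where the $q_s$ are interior vertices of $T$ lying strictly inside the triangle and the two ends $u_{i-1},x$ are joined by the side $(u_{i-1},x)$ of the triangle. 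The crucial step is to apply Lemma~\ref{lem:grado2} to the Hamiltonian outerplanar graph induced by $\sigma$: it produces two non-adjacent chord-free vertices of $\sigma$, and since $u_{i-1}$ and $x$ are adjacent, at least one of them is a fan vertex $v_i=q_s$. Being chord-free and strictly interior to the triangle (so all its neighbours lie in $R$), $v_i$ has, after the deletion of $u_i$, no neighbours on the new boundary except its two fan-consecutive vertices; hence it has degree $2$ there and a second use of Lemma~\ref{lem:deletion} shows $T-\{u_i,v_i\}$ is a near-triangulation. (The degenerate case $t=0$, where $R$ is a $K_4$ and $v_i=q_0$ already has degree $3$, is checked by hand.) I would also record here that the only common neighbours of $u_i$ and $v_i$ are its two flanking fan vertices, so the two triangles through $(u_i,v_i)$ are faces and, by Lemma~\ref{lem:contraction}, $(u_i,v_i)$ is contractible; this feeds into ii).

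Part ii) is the substantial one. If $e_i$ is non-contractible I would simply take $j=i$ and let $v_j=v_i$ be the vertex built in iii): it is adjacent to $u_i$, the edge $(u_i,v_i)$ is contractible, and $T-\{u_i,v_i\}$ is a near-triangulation, which is exactly what ii) asks (and $i<i-1+h$ since $h\ge 3$). If $e_i$ is contractible I would peel boundary vertices forward, removing $u_i,u_{i+1},\dots$ one at a time by part i) as long as each remains a degree-$2$ vertex of the current boundary, and stop at the first index $j$ at which the peeling stalls, i.e. where two boundary vertices first acquire a common interior neighbour that becomes exposed. This first stall localises a separating-triangle configuration to which the fan-and-ear argument of iii) (again Lemma~\ref{lem:grado2}) applies, yielding an interior vertex $v_j$ adjacent to $u_j$ with $(u_j,v_j)$ contractible and $T-\{u_i,\dots,u_j,v_j\}$ a near-triangulation. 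The assumption of at least two interior vertices is what I would use to guarantee that the stall, and hence $v_j$, occurs with $i\le j<i-1+h$, before the boundary is exhausted.

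The main obstacle I anticipate is controlling the evolution of the outer boundary during the peeling in ii): successive deletions of boundary vertices can introduce diagonals into the intermediate graphs, so one must prove that at the first stall the failure is exactly of the localised, single-interior-vertex kind described, that removing that one vertex restores a simple boundary cycle without destroying biconnectivity, and that all intermediate inner faces stay triangular. I would organise this by tracking the exposed fan and the first common neighbour that appears, reducing each stalling configuration to the chord-free-ear argument of iii), and treating separately the small or low-degree configurations so that the index constraint $i\le j<i-1+h$ is respected.
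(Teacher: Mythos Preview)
Your treatment of i) matches the paper's, and your approach to iii) is a legitimate alternative: the paper instead derives iii) \emph{from} ii), applied to the triangulation bounded by the separating triangle, whereas your direct fan-plus-Lemma~\ref{lem:grado2} argument works independently and even yields the extra fact that $(u_i,v_i)$ is contractible, which the paper does not record in iii).

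The real gap is in ii), Case~2. Your peeling of $u_i,u_{i+1},\dots$ ``until it stalls'' does not work as described. The difficulty is that the next original boundary vertex $u_{j+1}$ may well remain of degree~$2$ in the current boundary-induced subgraph even after some exposed interior vertex has already dropped to degree~$2$; nothing forces a stall at that moment, so your process can march past the point where the desired $v_j$ appears, and may run around the whole boundary. And when a stall does occur because $u_{j+1}$ has a chord to some exposed interior vertex, that vertex need not be a neighbour of $u_j$ (it could have been exposed several steps earlier as a neighbour of some $u_l$ with $l<j$), so there is no evident separating-triangle configuration at $u_j$ to which your iii)-style argument applies. You flag exactly this as the main obstacle, and it is a genuine one, not just a matter of bookkeeping.

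The paper's mechanism is different and sidesteps the issue. Rather than always removing the next original boundary vertex, at each step it removes the \emph{first} degree-$2$ vertex of the current boundary clockwise from $u_1$, and stops the first time this vertex happens to be an interior vertex of $T$. A short inductive claim (Claim~\ref{claim1}) shows that until that moment the first degree-$2$ vertex is always $u_{j+1}$, so the removed boundary vertices are exactly $u_i,\ldots,u_j$, as required. The payoff of this stopping rule is adjacency: since none of the previously exposed interior vertices had degree~$2$ before $u_j$ was removed, the only one whose degree can drop upon removing $u_j$ is the one immediately preceding $u_j$ on the boundary, so the first interior degree-$2$ vertex $v_j$ is either that vertex or a newly exposed neighbour of $u_j$; in both cases $v_j$ is adjacent to $u_j$, and the same local analysis gives contractibility of $(u_j,v_j)$. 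Replacing your stall criterion by ``first degree-$2$ vertex from $u_{i-1}$'' is the missing ingredient.
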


\begin{proof}
Since the starting vertex of $C$ is arbitrary, we may assume without loss of generality that $u_i$ is $u_2$.

i) There are no diagonals in $T$, so the degree of $u_2$ in $T[C]$ is 2. Thus, the statement follows from Lemma~\ref{lem:deletion}.

ii) Let $u_{1},w_1,\ldots ,w_k,u_{3}$ be the set of neighbors of $u_2$ in $T$, in counterclockwise order. Since there are no diagonals in $T$, $u_2$ is a vertex of degree 2 in $T[C]$ and $k\ge 1$. By Lemma~\ref{lem:deletion}, after removing $u_2$ we obtain a new near-triangulation $T_2=T-\{ u_2\} $ with boundary cycle $C_2=(u_1,w_1,\ldots ,w_k,u_{3},u_4,\ldots ,u_1)$ (see Figure~\ref{fig:lemma3b}).

We repeat this operation and we remove from $T_2$ the first vertex $w$ of degree 2 in $T_2[C_2]$, clockwise from $u_1$. By Lemma~\ref{lem:deletion}, we obtain again a near-triangulation $T_3=T-\{ u_2, w\} $ with boundary cycle $C_3$. Iterating this process, we obtain a sequence of near-triangulations $T_2, T_3, \ldots , T_j, T_{j+1}$, where $T_{i+1}$ is obtained from $T_{i}$, for $i=2, \ldots ,j$, by removing from $T_i$ the first vertex $w$ of degree 2 in $T_i[C_i]$, clockwise from $u_1$, and where we have stopped the process the first time that $w$ is an interior vertex in $T$. Hence, $T_{j+1} = T-\{ u_2,u_{3},\ldots , u_j,v_j\} $, for some interior vertex $v_j$. See Figure~\ref{fig:lemma3} for an illustration of this process. Next we prove the following claim.

\begin{figure}[ht]
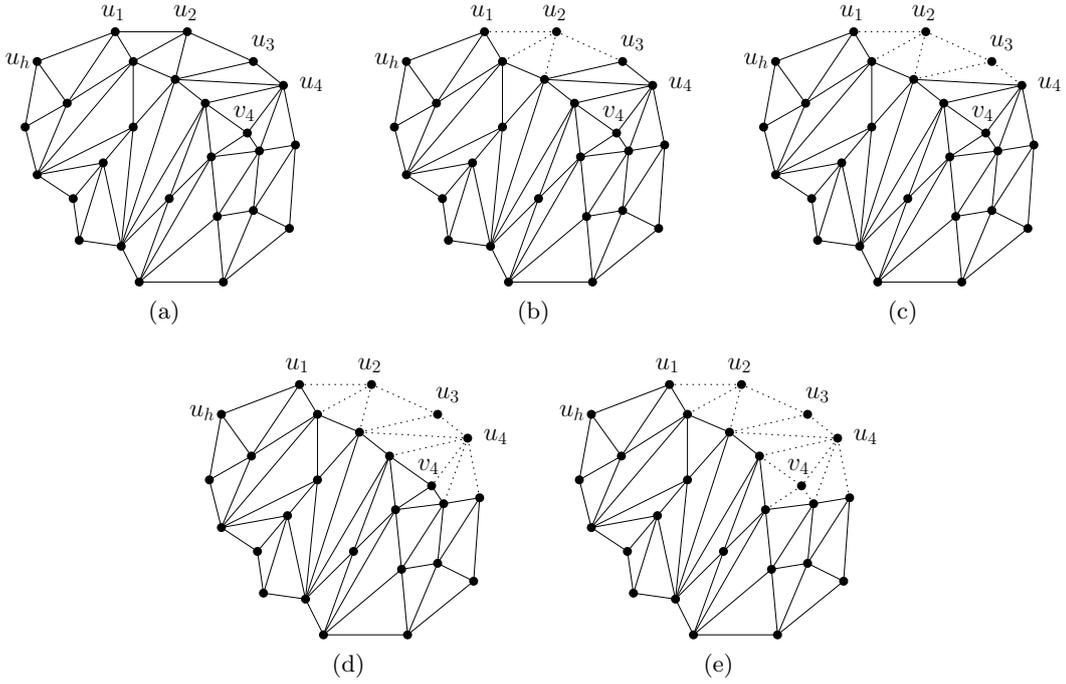

	\centering
	\subfloat[]{
		\includegraphics[scale=0.56,page=5]{img.pdf}
		\label{fig:lemma3a}
	}~~
	\subfloat[]{
		\includegraphics[scale=0.56,page=6]{img.pdf}
		\label{fig:lemma3b}
	}~~
	\subfloat[]{
		\includegraphics[scale=0.56,page=7]{img.pdf}
		\label{fig:lemma3c}
	}\\
	\subfloat[]{
		\includegraphics[scale=0.56,page=8]{img.pdf}
		\label{fig:lemma3d}
	}~~
	\subfloat[]{
		\includegraphics[scale=0.56,page=9]{img.pdf}
		\label{fig:lemma3e}
	}
	\caption{
        (a) A near-triangulation $T$ without diagonals. (b), (c), (d) and (e) Obtaining the near-triangulations $T_2, T_3, T_4$ and $T_5$ by removing successively the vertices $u_2, u_3, u_4$ and $v_4$.
	}
	\label{fig:lemma3}
\end{figure}

\begin{claim}\label{claim1}
For $i=2, \ldots , j$, the boundary cycle $C_i$ of $T_i$ consists of the following vertices and in this (clockwise) order: The vertex $u_1$, some vertices that are interior in $T$, and the boundary vertices $u_{i+1}, u_{i+2}, \ldots , u_h$.
\end{claim}
\noindent{\em Proof of Claim~\ref{claim1}}. The proof is by induction. The claim is obviously true for $T_2$, the base case. Assume that the claim is true for $T_2, \ldots , T_{i}$ and that $C_{i}$ is $(u_1, x_1, \ldots , x_l, u_{i+1}, u_{i+2}, \ldots , u_h, u_1)$, with $x_1, \ldots , x_l$ being interior vertices in $T$. Let us prove that $T_{i+1}$ satisfies the claim. By the construction of $T_{i+1}$, none of the vertices $x_1, \ldots , x_l$ has degree 2 in $T_i[C_i]$. Let us see that $u_{i+1}$ is the first vertex of degree 2 in $T_i[C_i]$ from $u_1$.  Assume to the contrary that its degree  in $T_i[C_i]$ is greater than 2, so there is a diagonal $(u_{i+1},y)$ in $T_i[C_i]$. Since $T$ has no diagonals, the vertex $y$ necessarily is one of the vertices in $\{x_1, \ldots , x_{l-1}\}$, say $x_k$. But then, by Lemma~\ref{lem:grado2}, in the subgraph induced by the vertices $x_k, x_{k+1}, \ldots , x_l, u_{i+1}$ there is a vertex $x_j$ of degree 2 different from $x_k$ and $u_{i+1}$, that also is a vertex of degree 2 in $T_i[C_i]$, which is a contradiction. Hence, $u_{i+1}$ is a vertex of degree 2 in $T_i[C_i]$. If we remove it from $T_i$, then the new cycle $C_{i+1}$ corresponding to $T_{i+1}$ is obtained from $C_i$ by adding the neighbors of $u_{i+1}$ in $T_i$ between $x_l$ and $u_{i+2}$. Therefore, the claim follows.
 \hfill $\square$

From the claim, the set of boundary vertices removed to obtain $T_j$ is $\{u_2, \ldots, u_j\}$, as required. Let us now see that during the previous process, there is always a first time in which an interior vertex in $T$ can be removed. Assume that the process does not finish before removing $u_{h-1}$. By removing $u_{h-1}$, we obtain a near-triangulation $T_{h-1}$ with boundary cycle $C_{h-1}=(u_1, x_1, \ldots , x_l, u_h,u_1)$. By hypothesis, $T$ contains at least two interior vertices, so $T_{h-1}$ is not a triangle. Thus, by Lemma~\ref{lem:grado2}, $T_{h-1}[C_{h-1}]$ contains a vertex $x_j$ of degree 2, different from $u_1$ and $u_h$, that must be an interior vertex in $T$, and can be removed from $T_{h-1}$ to obtain a near-triangulation by Lemma~\ref{lem:deletion}.

Let $v_j$ be the interior vertex in $T$ removed from $T_j$ to obtain $T_{j+1}$. To finish this part of the proof, we need to show that $u_j$ and $v_j$ are adjacent and that $(u_j,v_j)$ is contractible in $T$. Let $C_{j-1} = (u_1, y_1, \ldots , y_m, u_{j}, u_{j+1}, \ldots , u_h, u_1)$ be the boundary cycle of $T_{j-1}$. By hypothesis, none of the vertices $y_1, \ldots , y_m,$ has degree 2 in $T_{j-1}[C_{j-1}]$. Since the vertex $u_j$ has degree 2 in $T_{j-1}[C_{j-1}]$, it is not connected to any of $\{y_1, \ldots , y_{m-1}\}$, so when removing $u_j$ from $T_{j-1}$, the only vertex among $\{y_1, \ldots , y_m\}$ that could decrease its degree in $T_j[C_j]$ in relation to its degree in $T_{j-1}[C_{j-1}]$ is precisely $y_m$. Therefore, $v_j$ is either $y_m$ or one of the new vertices that appear in $C_j$. Since all of these vertices are neighbors of $u_j$, then $u_j$ and $v_j$ are adjacent.

Let us prove that $(u_j,v_j)$ is contractible in $T$. Assume to the contrary that the edge $(u_j,v_j)$ is not contractible. $T$ has no diagonals, hence there exists a separating triangle $\Delta=(u_j,v_j,v)$ in $T$ by Lemma~\ref{lem:contraction}. The vertex $u_j$ has degree 2 in $T_{j-1}[C_{j-1}]$ and $T$ has no diagonals, so all the neighbors of $u_j$ in $T$ must belong to $C_j$ except for $u_{j-1}$. The vertex $v_j$ has degree 2 in $T_{j}[C_{j}]$, hence the only neighbors of $u_j$ adjacent to $v_j$ are the predecessor and the successor of $v_j$ in $C_j$. Thus, $v$ must be one of these two vertices. But in both cases, $\Delta$ would be empty, contradicting that $\Delta$ is separating. Therefore, $(u_j,v_j)$ is contractible in $T$.

iii) Suppose that the edge $(u_{1},u_2)$ is not contractible. Since $T$ contains no diagonals, by Lemma~\ref{lem:contraction} this edge must belong to a separating triangle $\Delta=(u_{1},u_2,w)$ containing some vertices inside, with $w$ being an interior vertex in $T$. If $\Delta$ only contains a vertex $w_i$, then $T-\{u_2,w_i\}$ is clearly a near-triangulation by Lemma~\ref{lem:deletion}, because $w_i$ is an interior vertex of degree 3 and $u_2$ is a vertex of degree 2 in $T[C]$. If $\Delta$ contains two or more vertices, then part ii) of this lemma can be applied to the triangulation $T'$ induced by $\Delta$ and its interior vertices, so there is a vertex $w_i$ inside $\Delta$ such that $T'-\{u_2,w_i\}$ is a near-triangulation. As a consequence, $T-\{u_2,w_i\}$ is also a near-triangulation.
\end{proof}

To finish this section, we show that for a boundary vertex, there is always a contractible edge incident to it.

\begin{lemma}\label{lem:ContractibleEdges}
Let $T$ be a near-triangulation of order $n\ge 5$, with boundary cycle $C=(u_1,\ldots ,u_h,u_1)$, and let $u_i$ be a vertex in $C$. Then,

\begin{itemize}
  \item[i)] If $u_i$ has a neighbor not in $C$, then there exists an interior vertex $v$ such that the edge $(u_i,v)$ is contractible. 
  \item[ii)] If all neighbors of $u_i$ are in $C$, then the edges $(u_{i-1},u_i)$ and $(u_i,u_{i+1})$ are contractible. 
\end{itemize}
\end{lemma}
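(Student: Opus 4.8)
The plan is to reduce both parts to the contractibility criterion of Lemma~\ref{lem:contraction}: an edge is contractible if and only if it is neither a diagonal nor an edge of a separating triangle. The two edges in part~ii) are boundary edges and the edge $(u_i,v)$ sought in part~i) joins a boundary vertex to an interior vertex, so none of them is a diagonal; hence in both parts the whole task becomes showing that a suitable edge incident to $u_i$ lies on no separating triangle. Throughout I would use the elementary observation that every vertex lying strictly inside a triangle $\Delta$ of $T$ is an interior vertex of $T$ (the cycle $C$ bounds the outer face, so no vertex of $C$ can be trapped inside a triangle all of whose edges lie in the triangulated region), together with the fact that if $\Delta$ is separating then its interior is nonempty, so the angular region at any corner of $\Delta$ is subdivided and that corner has a neighbour lying strictly inside $\Delta$.

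For part~ii), suppose all neighbours of $u_i$ lie on $C$ and that, say, $(u_i,u_{i+1})$ is an edge of a separating triangle $\Delta=(u_i,u_{i+1},w)$. The third vertex $w$ is a neighbour of $u_i$, hence $w\in C$; and since $\Delta$ is separating its interior is nonempty, so the fan of faces at the corner $u_i$ has at least two faces and $u_i$ has a neighbour strictly inside $\Delta$. That neighbour is an interior vertex of $T$, contradicting the hypothesis. Hence $(u_i,u_{i+1})$, and symmetrically $(u_{i-1},u_i)$, is on no separating triangle and, being a boundary edge, is contractible by Lemma~\ref{lem:contraction}.

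For part~i) the core is to locate an interior neighbour $v$ of $u_i$ with $(u_i,v)$ on no separating triangle. If no separating triangle has $u_i$ as a vertex, then no edge incident to $u_i$ lies on a separating triangle, and any edge from $u_i$ to one of its (existing, by hypothesis) interior neighbours is already contractible. Otherwise I would choose, among all separating triangles having $u_i$ as a vertex, one triangle $\Delta=(u_i,x,y)$ minimizing the number of vertices it encloses. Its interior being nonempty, the fan at $u_i$ inside $\Delta$ has at least two faces, so $u_i$ has a neighbour $c$ strictly inside $\Delta$, which is therefore an interior vertex of $T$. I then claim $(u_i,c)$ lies on no separating triangle: any separating triangle through $(u_i,c)$ has its third vertex among the neighbours of $c$, all of which lie in $\overline{\Delta}$ because $c$ is trapped inside $\Delta$; such a triangle is thus contained in $\overline{\Delta}$, still contains $u_i$, and—since $c$ is now a corner rather than an enclosed vertex—encloses strictly fewer vertices than $\Delta$, contradicting minimality. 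Consequently $(u_i,c)$ is neither a diagonal nor on a separating triangle, so it is contractible by Lemma~\ref{lem:contraction}, and $v=c$ is the required neighbour.

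The main obstacle is exactly the selection step in part~i): contractibility of any single fixed edge at $u_i$ can fail because of separating triangles, and one must guarantee that some interior-neighbour edge escapes all of them. The minimality-of-enclosed-area argument is what forces this, the two delicate points being that a separating triangle produces a strictly-interior neighbour at each corner, and that every separating triangle through $(u_i,c)$ is confined to $\overline{\Delta}$; both rest on the planarity fact that all neighbours of a vertex trapped inside a triangle remain inside that triangle.
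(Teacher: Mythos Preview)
Your proof is correct. Part~ii) is essentially identical to the paper's argument: both observe that a separating triangle through the boundary edge would force $u_i$ to have an interior neighbour, contradicting the hypothesis.

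For part~i) you take a genuinely different route. The paper picks an arbitrary interior neighbour $v$ of $u_i$; if $(u_i,v)$ is not contractible it lies on a separating triangle $\Delta$, and the paper then invokes Lemma~\ref{lem:RemovingPoints}(ii) on the triangulation bounded by $\Delta$ to produce an interior vertex $v'$ with $(u_i,v')$ contractible in that sub-triangulation, and finally argues this edge is contractible in $T$ as well. Your argument bypasses Lemma~\ref{lem:RemovingPoints} entirely: you select at the outset a separating triangle through $u_i$ enclosing the fewest vertices, pick an interior neighbour $c$ of $u_i$ inside it, and use minimality directly to rule out any separating triangle through $(u_i,c)$. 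The containment step (any separating triangle through $(u_i,c)$ must sit inside $\overline{\Delta}$ because all neighbours of $c$ do) is the same observation the paper uses implicitly when lifting contractibility from $T'$ to $T$, so the geometric content is the same; what you gain is a self-contained proof that does not rely on the rather involved iterative vertex-removal procedure underlying Lemma~\ref{lem:RemovingPoints}(ii).
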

\begin{proof}
i) Suppose that the edge $e=(u_i,v)$ is not contractible, with $v\notin C$. Then $e$ must be an edge of a separating triangle $\Delta=(u_i,v,w)$. All vertices inside $\Delta$  are interior vertices in $T$, and the subgraph induced by $\Delta$ and its interior vertices is a triangulation $T'$. If $\Delta$ contains at least two vertices, then, by Lemma~\ref{lem:RemovingPoints}(ii), there exists an interior vertex $v'$ such that $T'-\{u_i,v'\}$ is a near-triangulation and $(u_i,v')$ is contractible in $T'$. But this edge is also contractible in $T$. If $\Delta$ only contains an interior vertex $z$, then the edge $(u_i,z)$ is clearly contractible in $T$.

ii) Suppose that the edge $e=(u_{i-1},u_i)$ is not contractible. This edge is not a diagonal, hence there exists a separating triangle $\Delta=(u_{i-1},u_i,u)$ containing at least one interior vertex. Thus, at least one of these interior vertices must be adjacent to $u_i$, which is a contradiction because we are assuming that all neighbors of $u_i$ belong to $C$. Therefore,  $(u_{i-1},u_i)$ is contractible. By the same argument, the edge $(u_{i},u_{i+1})$ is also contractible.
\end{proof}

\section{Dominating sets for some near-triangulations}\label{sec:dominating}

In this section we show how to build (total) dominating sets in some special cases of near-triangulations. These dominating sets are needed in the proof of the main theorem. We first give the following results for triangulated pentagons and hexagons, and MOPs in general~\cite{Dorfling16, Lemanska17}.

\begin{lemma}[\cite{Dorfling16, Lemanska17}] \label{lem:pentagon}
Let $T$ be a MOP of order 5 and let $C=(u_1,\ldots ,u_5,u_1)$ be its boundary cycle. For every vertex $u_i$, there exists a TDS in $T$ whose size is 2 and contains $u_i$.
\end{lemma}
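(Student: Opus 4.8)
The plan is to exploit the rigid structure of a triangulated pentagon and reduce everything to identifying one special vertex. First I would observe that a MOP of order $5$ has the $5$-cycle $C=(u_1,\ldots,u_5,u_1)$ as its boundary, and that, since a MOP on $n$ vertices has exactly $n-3$ diagonals, here there are precisely two diagonals. The key structural claim is that these two diagonals share an endpoint: any two diagonals of a convex pentagon that do not share a vertex must cross (one checks that the only diagonals disjoint from a fixed diagonal $\{u_a,u_c\}$ separate its endpoints and therefore intersect it), and crossing diagonals cannot both appear in a triangulation. Hence both diagonals emanate from a common vertex $w$, so $w$ is joined to each of the other four vertices; equivalently, $T$ is a \emph{fan} with apex $w$, and $N(w)=V\setminus\{w\}$. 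If one prefers to avoid the crossing argument, the same conclusion follows from the dual-tree viewpoint: the dual of any pentagon triangulation is a path on three nodes, and its middle triangle carries the two diagonals, forcing a universal apex; or one can simply enumerate the $C_3=5$ triangulations of the pentagon and note each is a fan.

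Once the apex $w$ is in hand, the desired TDS is produced by a short case analysis on the position of $u_i$. If $u_i\neq w$, then $u_i$ is adjacent to $w$, and I would take $D=\{u_i,w\}$; since $N(w)=V\setminus\{w\}$ dominates every vertex except $w$, while $u_i\in N(w)$ ensures $w$ itself is dominated by $u_i$, the pair $D$ is a total dominating set of size $2$ containing $u_i$. If instead $u_i=w$, I would take $D=\{w,b\}$ for any neighbour $b$ of $w$: again $N(w)=V\setminus\{w\}$ covers everything but $w$, and $w\in N(b)$ covers $w$, so $D$ is a total dominating set of size $2$ containing $u_i=w$. Note that $D$ is genuinely a TDS (not merely a dominating set) precisely because in both cases the two chosen vertices are adjacent, which guarantees each of them is dominated by the other.

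The only real content of the argument is the structural claim of the first paragraph, namely that every triangulation of a pentagon is a fan possessing a vertex adjacent to all others; this is the single step I expect to require care, although it is elementary and could, in the worst case, be verified by direct enumeration of the five triangulations. Everything afterwards is a routine verification that the explicit pair $\{u_i,w\}$ (or $\{w,b\}$) totally dominates $T$, and the conclusion holds uniformly for every choice of $u_i$ because the construction only depends on whether or not $u_i$ coincides with the apex.
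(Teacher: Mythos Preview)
Your argument is correct: every MOP on five vertices is a fan (the two diagonals of a triangulated pentagon must share an endpoint, else they cross), and pairing the prescribed vertex $u_i$ with the apex $w$ (or with any neighbour of $w$ when $u_i=w$) yields a total dominating set of size~$2$. The paper does not supply its own proof of this lemma---it is quoted from \cite{Dorfling16, Lemanska17}---so there is no in-paper argument to compare against; your self-contained proof via the fan structure is exactly the kind of elementary verification one would expect, and the alternative you mention (direct enumeration of the $C_3=5$ triangulations) is equally valid.
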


\begin{lemma}[\cite{Dorfling16, Lemanska17}]\label{lem:hexagon}
Let $T$ be a MOP of order 6 and let $C=(u_1,\ldots ,u_6,u_1)$ be its boundary cycle. For every pair $u_i,u_{i+1}$ of consecutive vertices in $C$, there exists a TDS in $T$ whose size is 2 and contains either $u_i$ or $u_{i+1}$.
\end{lemma}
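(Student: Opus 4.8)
The plan is to reduce the statement to a finite check. First I would observe that a total dominating set of size two is nothing but a \emph{dominating edge}: a pair $\{a,b\}$ is a TDS precisely when $a$ and $b$ are adjacent (so that each dominates the other) and $N(a)\cup N(b)=V$. Thus the lemma is equivalent to saying that for every boundary edge $u_iu_{i+1}$ of the triangulated hexagon $T$, at least one of $u_i,u_{i+1}$ is an endpoint of a dominating edge.

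Next I would cut down the number of cases by classifying the MOPs of order $6$. Such a graph is a triangulation of a hexagon; it has $9$ edges, $4$ interior triangles and $3$ diagonals, and by Lemma~\ref{lem:grado2} at least two vertices of degree $2$ (ears). I would argue that, since the maximum degree is at most $5$, the minimum degree is $2$, the degree sum is $18$, and there are at least two ears, the sorted degree sequence must be one of exactly three: $(5,3,3,3,2,2)$, $(4,4,4,2,2,2)$, or $(4,4,3,3,2,2)$. These correspond, up to the dihedral symmetry of the boundary, to the \emph{fan} (all diagonals incident to one vertex), the \emph{central-triangle} type (diagonals $u_1u_3,u_3u_5,u_5u_1$), and the \emph{strip} type (e.g. diagonals $u_1u_3,u_1u_4,u_4u_6$). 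That these three exhaust all cases can also be seen from the fact that the $14$ triangulations of a hexagon fall into three symmetry orbits, of sizes $6,2,6$.

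Then I would list, in each type, all dominating edges. In the fan, every edge incident to the degree-$5$ vertex is dominating, so every vertex lies in a dominating edge. In the central-triangle type the dominating edges are exactly the three diagonals, so the vertices lying in a dominating edge are the three hubs $u_1,u_3,u_5$. In the strip type a direct check gives the dominating edges $\{u_1,u_4\},\{u_3,u_4\},\{u_1,u_6\}$, so the good vertices are $u_1,u_3,u_4,u_6$. Finally I would verify that in each type the set of good vertices is a vertex cover of the boundary $6$-cycle, i.e. every boundary edge $u_iu_{i+1}$ has at least one good endpoint; because the good set alternates appropriately around the hexagon, this is immediate in each case.

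The only genuine obstacle is organizational: ensuring the classification into three types is truly exhaustive and that, within each type, all six boundary edges (not merely one representative) are covered. Symmetry carries most of this weight, since both the type list and the per-type verification are invariant under rotations and reflections of the boundary cycle, so once the three cases are set up the remaining work is routine and finite.
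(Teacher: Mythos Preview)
Your argument is correct. The reduction to ``dominating edges'' is the right observation, the classification of triangulated hexagons into the fan, the central-triangle type, and the strip (orbit sizes $6$, $2$, $6$ under the dihedral action, totalling the Catalan number $C_4=14$) is accurate, and your lists of dominating edges in each type check out; in particular, the sets $\{u_1,\dots,u_6\}$, $\{u_1,u_3,u_5\}$, and $\{u_1,u_3,u_4,u_6\}$ respectively do form vertex covers of the boundary $6$-cycle. One minor point: the degree-sequence argument alone does not quite pin down the three types without a word on realizability (e.g.\ ruling out $(5,4,3,2,2,2)$), but your alternative justification via the $14$ triangulations and their symmetry orbits closes that gap.

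As for comparison with the paper: there is nothing to compare. The paper does not prove this lemma; it is quoted verbatim from \cite{Dorfling16,Lemanska17} as a known auxiliary fact. Your write-up therefore supplies a self-contained proof where the paper simply cites one, which is perfectly acceptable.
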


\begin{theorem}[\cite{Dorfling16, Lemanska17}]\label{the:MOPs}
If $T$ is a MOP of order $n\ge 5$ and $T\notin \{H_1,H_2\}$, then $\gamma _t (T) \le \lfloor \frac {2n}{5} \rfloor$.
\end{theorem}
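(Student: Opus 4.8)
The plan is to prove the statement by strong induction on $n$, using the two structural features of MOPs highlighted above: a boundary edge of a MOP can be contracted to yield a smaller MOP (this is Lemma~\ref{lem:contraction} applied with the outer cycle being Hamiltonian), and a MOP always admits a diagonal separating it into two MOPs that share exactly that diagonal, one of which carries only $5$, $6$, $7$ or $8$ consecutive boundary edges. I would first establish this separating-diagonal fact by a counting argument on the weak dual tree of $T$ (the tree whose nodes are the interior triangles): walking along diagonals and tracking how many boundary edges lie on each side, one reaches a diagonal whose smaller side has between $5$ and $8$ boundary edges. The base of the induction is the finite family of MOPs with $5 \le n \le 20$; for these I would verify the bound directly, the two $12$-vertex graphs $H_1$ and $H_2$ arising here as the only cases where it fails, and I would dispose of $n=5$ and $n=6$ via Lemma~\ref{lem:pentagon} and Lemma~\ref{lem:hexagon}.

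For the inductive step, assume $n\ge 21$ and take a separating diagonal $(u_a,u_b)$ cutting off a small MOP $T_2$ of order $k+1\in\{6,7,8,9\}$, where $k\in\{5,6,7,8\}$ counts the boundary edges of $C$ inside $T_2$; write $T_1$ for the complementary MOP, which shares only $u_a$ and $u_b$ with $T_2$. Since $T_2$ has order at most $9$, its total domination number is at most $\lfloor 2\cdot 9/5\rfloor = 3$, and $T_2\notin\{H_1,H_2\}$ because those graphs have $12$ vertices; in the smallest cases the pentagon and hexagon lemmas furnish a TDS of $T_2$ of size $2$ that can be forced to contain one of the shared vertices $u_a,u_b$. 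I would then delete from $T$ the $k-1$ private boundary vertices of $T_2$, obtaining $T_1$ of order $n-k+1$; since $n\ge 21$ and $k\le 8$ we have $|T_1|\ge 14>12$, so $T_1\notin\{H_1,H_2\}$ and the induction hypothesis yields a TDS $D_1$ of $T_1$ with $|D_1|\le\lfloor 2(n-k+1)/5\rfloor$. Taking $D_2$ to be a small TDS of $T_2$ that meets a shared vertex, the union $D_1\cup D_2$ totally dominates $T$: the private vertices of $T_2$ are covered by $D_2$, the rest by $D_1$, and $u_a,u_b$ by either set.

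The delicate point, and the one I expect to be the main obstacle, is making the count $|D_1\cup D_2|\le\lfloor 2n/5\rfloor$ come out exactly right for every residue of $n$ modulo $5$. A naive estimate $|D_1|+|D_2|$ can overshoot $\lfloor 2n/5\rfloor$ by one, precisely because deleting $k-1$ vertices only buys a saving of about $2(k-1)/5$ against the floor, which may fall short of the dominators added. To absorb the surplus I would either arrange that $D_1$ and $D_2$ overlap in the shared vertex $u_a$, which forces me to strengthen the inductive statement to one that controls which boundary vertex lies in the dominating set (mirroring the pentagon and hexagon lemmas), or instead contract a boundary edge of $T_2$ by Lemma~\ref{lem:contraction} to remove one extra vertex, trading the surplus dominator for a surplus deletion. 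Choosing the diagonal, the value of $k$, and between these two devices so that the ratio of deleted vertices to added dominators stays at $2/5$ uniformly, while simultaneously checking that no reduction accidentally recreates $H_1$ or $H_2$, is where the real work lies; indeed the exceptional status of $H_1$ and $H_2$ is exactly why the induction cannot be started below $n=21$ and must be seeded by an explicit analysis of the small MOPs.
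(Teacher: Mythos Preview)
The paper does not prove Theorem~\ref{the:MOPs}; it is quoted from \cite{Dorfling16,Lemanska17} and serves as the base case ($m=0$) of the induction in Theorem~\ref{the:bound}. Your plan is exactly the approach the paper attributes to \cite{Lemanska17} in Section~\ref{sec:near}: induction on $n$ for $n\ge 21$, contractibility of boundary edges of a MOP, and a separating diagonal leaving $5$, $6$, $7$ or $8$ boundary edges on the small side. So at the level of strategy you have reconstructed the cited argument, and there is no independent proof in the present paper to compare against.

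That said, what you have written is an outline, not a proof, and the two places you defer are not minor. First, ``verify the bound directly for $5\le n\le 20$'' cannot be waved through: the number of MOPs on $n$ vertices is the Catalan number $C_{n-2}$, so for $n=20$ alone there are on the order of $10^{8}$ of them; the cited papers dispose of small $n$ by a structured argument, not by enumeration. Second, the case split over $k\in\{5,6,7,8\}$ that you postpone is precisely where the proof lives --- for each $k$ one must choose correctly between your two devices (forcing a shared vertex into the TDS versus contracting an edge) so that the arithmetic closes. If you want a worked template, Cases~2--5 in the proof of Theorem~\ref{the:bound} carry out exactly this analysis for a small MOP $M_j$ of order $6,7,8,9$ hanging off a diagonal, using Claim~\ref{Claim} (your contraction device) together with Lemmas~\ref{lem:pentagon} and~\ref{lem:hexagon} (your forced-vertex device); in the pure MOP setting those cases simplify, since every boundary edge is contractible and $H_1,H_2$ cannot reappear once $n\ge 21$.
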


The following lemma provides total dominating sets in some cases that involve the graphs $H_1$ and $H_2$.

\begin{lemma}\label{lem:ExceptionCases}
Let $T$ be a near-triangulation with boundary cycle $C=(u_1, \ldots , u_h,u_1)$.
\begin{itemize}
  \item [I)] For every vertex $u_i\in C$, $T$ has a TDS of size 5 containing $u_i$ if one of the following cases holds:
      \begin{itemize}
  \item[i)] $T$ is either $H_1$ or $H_2$.
  \item[ii)] $T-u_i$ is either $H_1$ or $H_2$.
  \item[iii)] $T-\{u_i,v_i\}$ is either $H_1$ or $H_2$ for some interior vertex $v_i$ adjacent to $u_i$.
  \item[iv)] $T/e$ is either $H_1$ or $H_2$ by contracting some edge $e$ incident with $u_i$.  
\end{itemize}
  \item[II)] For every edge $e_i=(u_i,u_{i+1})$ (where $i+1$ is taken modulo $h$), $T$ has a TDS of size $4$ containing $u_i$ or $u_{i+1}$ if $T-e_i$ is $H_1$ or $H_2$.
\end{itemize}
\end{lemma}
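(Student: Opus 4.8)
The plan is to read all four items of Part~I and the item of Part~II as finite verifications about the two explicit $12$-vertex graphs $H_1$ and $H_2$ of Figure~\ref{fig:ExceptionCases}, organised so that the number of genuinely different configurations stays small. First I would fix a labelling of $V(H_1)$ and $V(H_2)$ and determine their automorphism groups. Every statement to be proved is invariant under relabelling by an automorphism of $H$ (and, in Part~I(iv) and Part~II, under the induced action on incident edges), so it suffices to verify each assertion for one vertex, respectively one edge, per orbit. This verification also records that $\gamma_t(H_1)=\gamma_t(H_2)=5$, consistent with these graphs being the exceptions to Theorem~\ref{the:MOPs}: for $n=12$ one has $\lfloor 2n/5\rfloor=4<5$.

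For Part~I(i) this is immediate: for each orbit representative $u_i$ I would exhibit a five-element set $D\ni u_i$ and check that every vertex of $H$ has a neighbour in $D$. For the remaining items the structural point is that $T$ differs from a copy of $H\in\{H_1,H_2\}$ only by a controlled local modification at $u_i$. In (ii) the vertex $u_i$ is a new boundary vertex joined to a consecutive arc of the boundary of $H$; in (iii) one additionally attaches an interior vertex $v_i$ adjacent to $u_i$; and in (iv), by Lemma~\ref{lem:contraction}, contracting an edge $e=(u_i,x)$ merges $u_i$ with a neighbour $x$, so $T$ is obtained from $H$ by splitting the image vertex $w\in V(H)$ back into $e$. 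In all three cases the neighbours of $u_i$ in $T$ form a block of consecutive vertices of $H$.

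Using this, I would build the required size-$5$ TDS of $T$ from the size-$5$ total dominating sets of $H$ produced in (i). In (iv) I would start from a size-$5$ set $D_H\ni w$ that totally dominates $H$ and set $D_T:=(D_H\setminus\{w\})\cup\{u_i\}$; this has size $5$ and contains $u_i$, and it remains to check that $x$ and the neighbours of $x$ that $w$ used to dominate still have a neighbour in $D_T$, which can be arranged by choosing $D_H$ suitably among the sets of (i). In (ii) and (iii), where $u_i\notin V(H)$ is adjacent in $T$ to a block $B$ of consecutive vertices of $H$, I would instead take $u_i$ together with one vertex $p\in B$ (so that $u_i$ and $p$ dominate each other) and complete with three vertices of $H$, again guided by (i), to dominate $V(H)\setminus B$, the added vertex $v_i$ in (iii) being already dominated by $u_i$. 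Since the orders of $T$ are $13$, $14$, $13$ in (ii), (iii), (iv), the target $\lfloor 2n/5\rfloor$ is exactly $5$, so there is no numerical slack and each configuration really has to be checked; where a direct construction is awkward I would cut $T$ by a diagonal incident to $u_i$ into a pentagon or hexagon and a remainder, routing the set through $u_i$ by Lemma~\ref{lem:pentagon} or Lemma~\ref{lem:hexagon}. For Part~II, $T=H+e_i$ still has $12$ vertices, the added edge $e_i=(u_i,u_{i+1})$ is exactly what lets the total domination number drop to $4=\lfloor 24/5\rfloor$, and for each edge addable to $H_1$ or $H_2$ that preserves a near-triangulation I would display a four-element TDS containing $u_i$ or $u_{i+1}$ and verify it.

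The main obstacle is the case multiplicity in (ii)--(iv): one must list every way a new boundary vertex, an added interior vertex, or a vertex split can be placed at each orbit position of $H_1$ and $H_2$, and for each produce an \emph{efficient} dominating set, since the size budget equals $\lfloor 2n/5\rfloor$ with no rounding to spare. What makes this tractable rather than a brute-force enumeration is the symmetry of $H_1$ and $H_2$, which collapses the attachment positions to a few representatives, together with the observation that in each configuration the needed set is only a one-vertex perturbation of a size-$5$ set already exhibited in Part~I(i).
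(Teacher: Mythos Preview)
Your plan takes a genuinely different route from the paper and, as written, leaves a real gap in Part~I(iv). The paper does not enumerate automorphism orbits or perturb size-$5$ sets of $H$; instead it exploits one structural feature of $H_1$ (and analogously $H_2$): the central triangle $\Delta=\{w_1,w_2,w_3\}$ and the three peripheral triangles $\Delta_1,\Delta_2,\Delta_3$, each containing a degree-$2$ vertex, with $w_j$ non-adjacent to $\Delta_j$. The key point is that $H_1-\Delta_j$ is a MOP on $9$ vertices, hence has a TDS of size~$3$ by Theorem~\ref{the:MOPs}. In each of (ii)--(iv) the paper argues, using only planarity and the boundary position of $u_i$, that either $u_i$ has a neighbour $v$ inside some $\Delta_j$, or (in (iii)) $v_i$ is adjacent to all of some $\Delta_j$; then $\{u_i,v\}$ (respectively $\{u_i,v_i\}$) together with a size-$3$ TDS of $H_1-\Delta_j$ gives the required five vertices. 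Part~II is handled the same way via the location of the third vertex of the triangle on $e_i$, combined with Lemma~\ref{lem:pentagon}. The only case distinction is ``which $\Delta_j$'', and the arithmetic $2+3=5$ closes every branch uniformly.

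Your perturbation step in (iv), setting $D_T=(D_H\setminus\{w\})\cup\{u_i\}$, is where the plan is unsafe. After the vertex $w$ is split into $u_i$ and $x$, any vertex that $w$ alone dominated in $D_H$ may become a neighbour of $x$ only, not of $u_i$, and so lose its dominator; likewise $u_i$ itself need not be dominated, since the element of $D_H$ that dominated $w$ may also lie on the $x$-side of the split. You assert this ``can be arranged by choosing $D_H$ suitably among the sets of~(i)'', but that is precisely the thing to prove, and it depends on the specific split rather than on the orbit of $w$: for a fixed $w$ there are several non-isomorphic splittings, each requiring its own $D_H$. A similar, milder concern applies to (ii)--(iii): completing $\{u_i,p\}$ with three further vertices that totally dominate the rest of $H$ is not automatic from the sets produced in~(i). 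Your fallback of cutting off a pentagon or hexagon through $u_i$ moves in the right direction, but the paper's choice of cutting off a \emph{triangle} $\Delta_j$ (leaving a $9$-vertex MOP) is what makes the count work without residual case-checking.
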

\begin{proof}
We prove the lemma assuming that $H_1$ is $T$ or the graph obtained from $T$. The analysis is totally analogous if $H_2$ is $T$ or the graph obtained from $T$. Let $\Delta$ be the central triangle of $H_1$, consisting of the vertices $w_1, w_2$ and $w_3$. See Figure~\ref{fig:lemma7a}. The three triangles that contain the three vertices of degree 2 are denoted by $\Delta_1,\Delta_2$ and $\Delta_3$, respectively, where $w_i$ is not adjacent to any vertex in $\Delta_i$, for $i=1,2,3$.

\begin{figure}[ht]
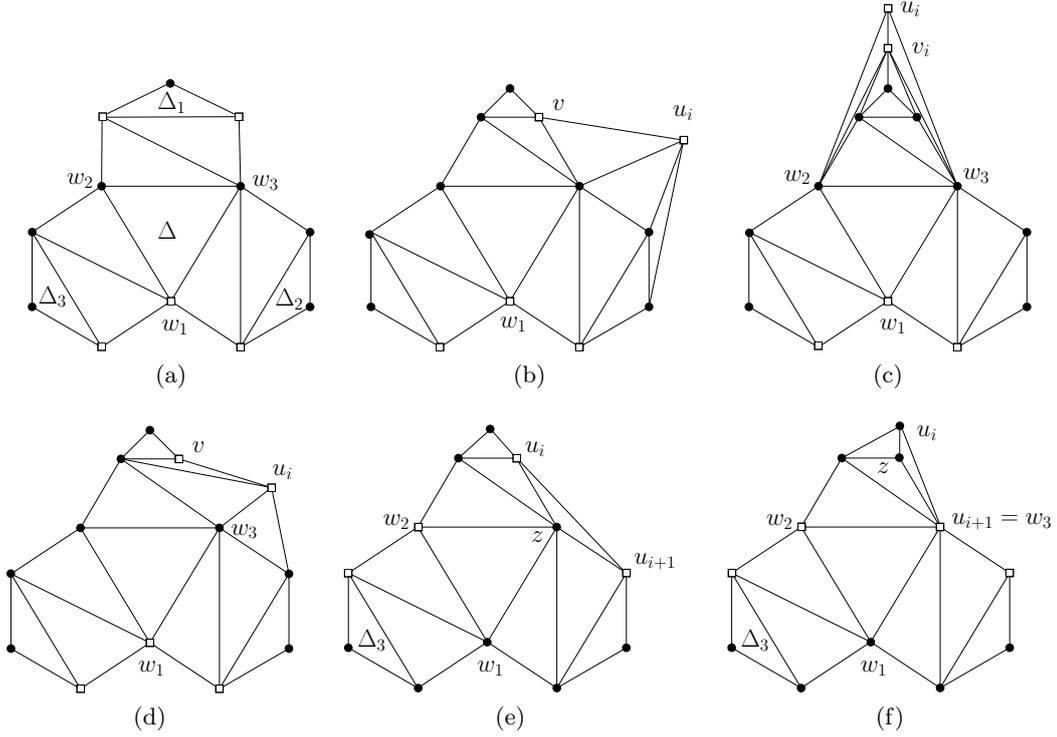

	\centering
	\subfloat[]{
		\includegraphics[scale=0.54,page=10]{img.pdf}
		\label{fig:lemma7a}
	}~~	
    \subfloat[]{
		\includegraphics[scale=0.54,page=11]{img.pdf}
		\label{fig:lemma7b}
	}~~
	\subfloat[]{
		\includegraphics[scale=0.54,page=12]{img.pdf}
		\label{fig:lemma7c}
	}\\
	\subfloat[]{
		\includegraphics[scale=0.54,page=13]{img.pdf}
		\label{fig:lemma7d}
	}~~
	\subfloat[]{
		\includegraphics[scale=0.54,page=14]{img.pdf}
		\label{fig:lemma7e}
	}~~
	\subfloat[]{
		\includegraphics[scale=0.54,page=15]{img.pdf}
		\label{fig:lemma7f}
	}
	\caption{
		Illustrating Lemma~\ref{lem:ExceptionCases}. In each case, the squared vertices form a TDS of $T$. (a) The graph $H_1$. (b) Removing the vertex $u_i$. (c) Removing the vertices $u_i$ and $v_i$. (d) The vertex $w_3$ is the vertex obtained by contracting the edge $(u_i,v_i)$. (e) and (f) Removing the edge $(u_i, u_{i+1})$.
	}
	\label{fig:lemma7}
\end{figure}

i) Suppose that $T=H_1$. If $u_i$ belongs to $\Delta$, say $u_i=w_1$, then $w_1$, its two neighbors in $C$ and two arbitrary vertices in $\Delta _1$ form a TDS $D$ (see Figure~\ref{fig:lemma7a}). If $u_i$ belongs to one of $\Delta _1, \Delta _2$ and $\Delta _3$, say $\Delta _1$, then $D$ is also a TDS by choosing $u_i$ as one of the vertices of $\Delta_1$ in $D$.

ii) Suppose that $T-u_i$ is $H_1$.
In this case, $u_i$  has at least two neighbors in $T$ that necessarily are consecutive vertices on the boundary of $H_1$ (see Figure~\ref{fig:lemma7b}). Hence, $u_i$ has a neighbor $v$ in one of the triangles $\Delta _1, \Delta _2$ and $\Delta _3$, say triangle $\Delta_1$. Then, $u_i$, $v$ and the three vertices of a TDS of the MOP $H_1-\Delta_1$ of order 9 define a TDS of $T$.


iii) Suppose that $T-\{u_i,v_i\}$ is $H_1$ for some interior vertex $v_i$ adjacent to $u_i$. Assume first that $u_i$ has a neighbor $v$ in one of the triangles $\Delta _1, \Delta _2$ and $\Delta _3$, say triangle $\Delta_1$. As in the previous case, $u_i$, $v$ and the three vertices of a TDS of the MOP $H_1-\Delta_1$ of order 9 define a TDS of $T$.

Assume now that none of the vertices in $\Delta _1, \Delta _2$ and $\Delta _3$ is adjacent to $u_i$. In this case, since $T$ is a near-triangulation and $u_i$ a boundary vertex, then $v_i$ must be adjacent to all the vertices of at least one of the triangles $\Delta _1, \Delta _2$ and $\Delta _3$, say $\Delta_1$ (see Figure~\ref{fig:lemma7c} for an example). Therefore, $u_i$, $v_i$, and the three vertices of a TDS of the MOP $H_1-\Delta_1$ define a TDS of $T$.

iv) Suppose that $H_1$ is obtained from $T$ by contracting an edge $e=(u_i,v_i)$ incident with $u_i$, and let $w$ be the new vertex obtained after contracting this edge. If $w$ is one of the vertices of $\Delta_1, \Delta_2$ or $\Delta_3$, say $\Delta_1$, then the set formed by $u_i, v_i$ and the three vertices of a TDS of $H_1-\Delta_1$ is a TDS of $T$.

On the contrary, suppose that $w$ is one of the vertices of $\Delta$, say $w_3$ (see Figure~\ref{fig:lemma7d} for an example). In this case, $u_i$ has a neighbor $v$ in $T$ belonging to either $\Delta_1$ or $\Delta_2$. Assume that $v$ belongs to $\Delta_1$. The set formed by $u_i, v$ and the three vertices of a TDS of $H_1-\Delta_1$ is a TDS of $T$.

II) Suppose that $T-e$ is $H_1$ for some edge $(u_i,u_{i+1})$.
Let $z$ be the third vertex of the triangle in $T$ containing $e$.
Then $z$ belongs to one of the triangles $\Delta$, $\Delta_1$, $\Delta_2$ or $\Delta_3$.
Suppose first that $z$ belongs to $\Delta$. We may assume that $z=w_3$ (see Figure~\ref{fig:lemma7e}). Then, $u_i$ belongs to $\Delta_1$ and $u_{i+1}$ to $\Delta_2$ or viceversa. According to Lemma~\ref{lem:pentagon}, there is a TDS $D$ of size 2 containing $w_2$ in the triangulated pentagon defined by $w_1$, $w_2$ and $\Delta_3$. Therefore, $u_i, u_{i+1}$ and $D$ define a TDS of size 4 in $T$.

Now suppose that $z$ belongs to one of the triangles  $\Delta_1$,  $\Delta_2$ or  $\Delta_3$, say $\Delta_1$ (see Figure~\ref{fig:lemma7f}). Thus,
one of the vertices of $\{u_i,u_{i+1}\}$ is the vertex of degree 2 of $\Delta_1$ and the other one is $w_2$ or $w_3$, say $w_3$. If $D$ is a TDS of size 2 containing $w_2$ in the triangulated pentagon defined by $w_1$, $w_2$ and $\Delta_3$, then $D$ together with $w_3$ and a vertex in $\Delta_2$ adjacent to $w_3$ form a TDS of size 4 in $T$.
\end{proof}

To finish this section, we give some bounds on the size of a (total) dominating set of a near-triangulation under the contraction operation. Given a simple graph $G=(V,E)$, we say that a vertex $v\in V$ dominates a vertex $u\in V$ if $v$ and $u$ are adjacent in $G$. Thus, a vertex $v\in V$ dominates all its neighbors in $G$ but not itself.

\begin{lemma}\label{StandardLemma}
Let $T$ be a near-triangulation of order $n\ge 5$ with boundary cycle $C=(u_1, \ldots ,u_h,u_1)$. Suppose that for some vertex $u_i$ there is a contractible edge $e=(u_i,v_i)$ of $T$ such that $T/e$ has a TDS of size $s$. Then:
\begin{itemize}
	\item[I)] $T$ has a set of vertices $D$ satisfying one of the following conditions:
	\begin{itemize}
     \item[i)] $D$ is a TDS of size $s+1$ in $T$ such that $u_i$ and $v_i$ belong to $D$,
     \item[ii)] $D$ is a set of vertices of size $s$ such that neither $u_i$ nor $v_i$ belong to $D$ and $D$ dominates all vertices of $T$ except possibly one of $u_i$ or $v_i$.
    \end{itemize}
	\item[II)] There is a dominating set $D$ of size $s+1$ in $T$ such that $D$ contains $u_i$ and either $D$ is a TDS of $T$ or $D$ dominates all vertices of $T$ except possibly $u_i$.
\end{itemize}
\end{lemma}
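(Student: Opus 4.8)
The plan is to lift a minimum total dominating set of the contracted graph $T/e$ back to $T$ by ``un-contracting'' the new vertex. Write $w$ for the vertex of $T/e$ created by contracting $e=(u_i,v_i)$, and fix a TDS $D'$ of $T/e$ with $|D'|=s$. The whole argument rests on two elementary adjacency facts that follow at once from the definition of edge contraction recalled before Lemma~\ref{lem:contraction}: first, any two vertices $x,y\in V(T)\setminus\{u_i,v_i\}$ are adjacent in $T$ if and only if they are adjacent in $T/e$; and second, a vertex $z\in V(T)\setminus\{u_i,v_i\}$ is adjacent to $w$ in $T/e$ if and only if $z$ is adjacent to $u_i$ or to $v_i$ in $T$. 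With these in hand I would prove I) by splitting according to whether $w\in D'$.

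If $w\notin D'$, I take $D=D'$, now regarded as a set of $s$ vertices of $T$ none of which is $u_i$ or $v_i$, and aim for conclusion ii). Every vertex $z\neq u_i,v_i$ is dominated in $T/e$ by some $d\in D'$ with $d\neq w$, so by the first adjacency fact $z$ is dominated by $d$ in $T$; hence $D$ dominates all of $T$ except possibly $u_i$ and $v_i$. The decisive point is that, since $D'$ is a \emph{total} dominating set, $w$ itself is dominated in $T/e$ by some $x\in D'$, and by the second adjacency fact $x$ is then adjacent in $T$ to $u_i$ or to $v_i$; thus at least one of $u_i,v_i$ is already dominated by $D$, leaving at most one undominated, exactly as ii) requires.

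If $w\in D'$, I write $D'=D''\cup\{w\}$ and take $D=D''\cup\{u_i,v_i\}$, a set of size $s+1$ containing both endpoints of $e$, aiming for conclusion i). To check that $D$ is a TDS, note that any $z\neq u_i,v_i$ is dominated in $T/e$ by some $d\in D'$; if $d\neq w$ the first fact gives domination by $d\in D$ in $T$, while if $d=w$ the second fact puts a neighbour of $z$ among $\{u_i,v_i\}\subseteq D$. Finally $u_i$ and $v_i$ dominate each other because $e=(u_i,v_i)\in E(T)$ and both lie in $D$, so every vertex of $T$ is dominated and i) holds.

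Part II then follows immediately from Part I. If Part I produced the set of i), it is already a TDS of size $s+1$ containing $u_i$, so the first alternative of II) holds verbatim. If Part I produced the set $D$ of ii), I enlarge it to $\hat D=D\cup\{u_i\}$, of size $s+1$ and containing $u_i$; adding $u_i$ dominates its neighbour $v_i$, so $\hat D$ dominates every vertex of $T$ except possibly $u_i$, and since $u_i\in\hat D$ this makes $\hat D$ a dominating set realizing the second alternative of II). The only genuinely delicate step is the totality argument in the case $w\notin D'$: it is precisely the fact that $w$ must itself be dominated (not merely that the non-$w$ vertices are) that prevents \emph{both} $u_i$ and $v_i$ from being left undominated, and tracking ``all but at most one'' there is where care is needed; everything else is bookkeeping on top of the two adjacency facts.
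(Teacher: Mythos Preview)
Your proof is correct and follows exactly the same approach as the paper: split according to whether the contracted vertex $w$ lies in the TDS $D'$ of $T/e$, taking $D=(D'\setminus\{w\})\cup\{u_i,v_i\}$ when $w\in D'$ and $D=D'$ (respectively $D'\cup\{u_i\}$ for part II) when $w\notin D'$. The paper's own proof is terser, simply citing this as a ``well-known result for abstract graphs,'' whereas you spell out the adjacency facts and the totality argument in the $w\notin D'$ case; but the underlying logic is identical.
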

\begin{proof}

I) The result follows from the same well-known result for abstract graphs: If $G/e$ is the graph obtained by contracting an edge $e=(u_i,v_i)$ of $G$ to a new vertex $w$, according to whether $w$ belongs to a TDS $D'$ of size $s$ in $G/e$ or not, either i) the set $D=\{ D'-w\} \cup \{u_i,v_i\} $ is a TDS of $G$ or ii) $D=D'$ dominates all vertices of $G$ except possibly $u_i$ or $v_i$.

II) As before, if the new vertex $w$ belongs to a TDS $D'$ of size $s$ in $G/e$, then $D=\{ D'-w\} \cup \{u_i,v_i\} $ is a TDS of $G$. Otherwise, the set $D = D' \cup \{u_i\}$ dominates all vertices of $T$ except possibly $u_i$.
\end{proof}

\section{Upper bound for near-triangulations}\label{sec:bound}

In this section we prove the main result of this paper: the upper bound $\lfloor \frac {2n}{5} \rfloor$ on the total domination number in near-triangulations of order $n$. Before proving it, we define the two main concepts required in its proof: reducible near-triangulations and terminal polygons.

Let $T$ be a near-triangulation with some interior vertices and boundary cycle $C=(u_1,u_2,\ldots,u_h,u_1)$. We say that $T$ is {\em reducible} if it contains a triangle $(u_i,u_{i+1},v)$ with $v$ a vertex not in $C$. In this case, by removing the boundary edge $u_iu_{i+1}$, we obtain a new near-triangulation $T'$ with boundary cycle  $C'=(u_1,\ldots,u_i,v,u_{i+1},\ldots ,u_h,u_1)$. Obviously, $\gamma _t(T)\le \gamma _t(T')$, and $T'$ contains fewer interior vertices than $T$. If $T'$ is also reducible, then we can obtain a new near-triangulation $T''$ with fewer interior points than $T'$. Iterating this process, we reach either a near-triangulation without interior vertices (a MOP), or a near-triangulation with interior vertices that is {\em irreducible}, that is, a near-triangulation with interior vertices such that for every boundary edge $(u_i, u_{i+1})$, the vertex $v$ in the triangular face $(u_i,u_{i+1},v)$ adjacent to $(u_i,u_{i+1})$ is also in $C$.  The simplest irreducible near-triangulation $H$ has order 7 and is shown in Figure~\ref{fig:NonreducibleBasic}. With these definitions, note that if $T$ is a near-triangulation, then $T$ is either reducible, or irreducible or a MOP.

\begin{figure}[ht]
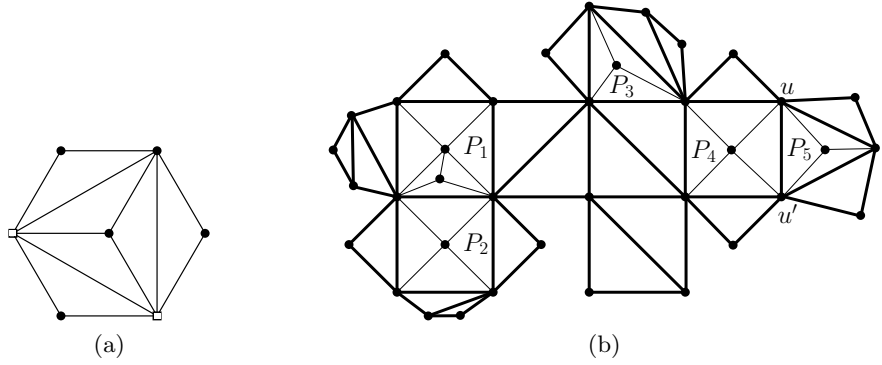

	\centering
	\subfloat[]{
		\includegraphics[scale=0.56,page=16]{img.pdf}
		\label{fig:NonreducibleBasic}
	}~~~~~~~~~
	\subfloat[]{
		\includegraphics[scale=0.56,page=17]{img.pdf}
		\label{fig:Nonreducible}
	}
	\caption{
		(a) The simplest irreducible near-triangulation $H$. The squared vertices form a total dominating set. (b) A irreducible near-triangulation $T$. Thick lines correspond to the subgraph $T[C]$. The diagonals of $T[C]$ define a set of adjacent polygons, five of which are non-empty and three are terminal, $P_2,P_3$ and $P_5$.
	}
	\label{fig:polygons}
\end{figure}

Let $T$ be a irreducible near-triangulation with boundary cycle  $C=(u_1,u_2,$ $\ldots,u_h,$ $u_1)$. The diagonals of the subgraph $T[C]$ divide the interior of $C$ into several regions whose interiors are disjoint. These regions are simple polygons that can be non-empty or empty, depending on whether they contain interior vertices of $T$ or not (see Figure~\ref{fig:Nonreducible}). Let $P_1,\ldots P_k$ denote the  polygons obtained in this way such that they contain some interior vertex of $T$.
The irreducible near-triangulation shown in Figure~\ref{fig:Nonreducible} contains five non-empty polygons $P_1, P_2, P_3, P_4$ and $P_5$. Observe that, by definition, every side $d$ of a polygon $P_i$ has to be a diagonal of $T[C]$, and that $P_i$ has no diagonals.
Therefore, a side $d$ of a polygon $P_i$ divides $T$ into two non-empty near-triangulations $T_{in}(P_i,d)$ and $T_{out}(P_i,d)$ sharing $d$, where $T_{in}(P_i,d)$ denotes the near-triangulation containing the polygon $P_i$. In Figure~\ref{fig:Nonreducible}, $T_{in}(P_5,(u,u'))$ is the near-triangulation of order 6 containing $P_5$.

We say that a non-empty polygon $P_i$ is {\em terminal} if at most one of the near-triangulations $T_{out}(P_i,d)$ corresponding to the sides $d$ of $P_i$ (diagonals in $T[C]$) contains interior vertices. Hence, if $P_i$ is a terminal polygon with $k$ sides, then at least $k-1$ of the near-triangulations $T_{out}(P_i,d)$ are MOPs with at least three vertices. The irreducible near-triangulation shown in Figure~\ref{fig:Nonreducible} contains three terminal polygons  $P_2,P_3,P_5$. The following lemma shows that a irreducible near-triangulation has at least one terminal polygon.

\begin{lemma}\label{lem:terminal}
Let $T$ be a irreducible near-triangulation of order $n\ge 7$ with boundary cycle $C$. Then, $T$ contains at least one terminal polygon.
\end{lemma}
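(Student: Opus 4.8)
The plan is to encode the adjacency of the regions cut out by the diagonals of $T[C]$ as a tree, and then produce a terminal polygon as a suitable leaf of that tree.

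First I would set up the region-adjacency tree. The diagonals of $T[C]$ are pairwise non-crossing chords of the simple polygon bounded by $C$, and they partition its interior into regions. Let $\mathcal{R}$ be the graph whose nodes are these regions and whose edges join two regions that share a diagonal. A standard counting argument shows $\mathcal{R}$ is a tree: inserting the diagonals one at a time, each new non-crossing diagonal splits exactly one region into two, so the number of regions is one more than the number of diagonals; since each diagonal is shared by exactly two regions, $\mathcal{R}$ has exactly as many edges as there are diagonals, hence one fewer than its number of nodes, and it is connected because the interior of $C$ is connected. I would then record the dictionary between $\mathcal{R}$ and the objects in the statement. Every interior vertex of $T$ lies strictly inside a unique region (it sits on no diagonal and on no edge of $C$), so the non-empty polygons $P_1,\dots ,P_k$ are exactly the regions that carry an interior vertex. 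Moreover, for a side $d$ of a polygon $P_i$, deleting the edge $d$ from $\mathcal{R}$ splits it into two subtrees, and $T_{out}(P_i,d)$ corresponds to the one not containing $P_i$; hence $T_{out}(P_i,d)$ contains an interior vertex of $T$ if and only if that subtree contains a non-empty polygon.

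With this dictionary the statement becomes purely combinatorial: mark the nodes $P_1,\dots ,P_k$ of the tree $\mathcal{R}$, and observe that deleting a node $P_i$ leaves one component across each side of $P_i$, so $P_i$ is terminal precisely when at most one of these components contains a marked node. Since $T$ is irreducible it has interior vertices, so $k\ge 1$. If $k=1$ the single marked node is trivially terminal. If $k\ge 2$, I would pass to the minimal subtree $S\subseteq \mathcal{R}$ spanning the marked nodes (their Steiner tree). Every leaf of $S$ is a marked node, since an unmarked leaf could be pruned without disconnecting the marked set, contradicting minimality; and $S$, being a tree on at least two nodes, has at least two leaves. I would pick any leaf $P_i$ of $S$: it has exactly one neighbour inside $S$, reached across one side $d_1$, so all other sides of $P_i$ lead in $\mathcal{R}$ into components disjoint from $S$ and therefore free of marked nodes. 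Translating back, at most one $T_{out}(P_i,d)$ (the one across $d_1$) contains an interior vertex, so $P_i$ is terminal.

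The routine parts here are the region count and the leaf-of-a-Steiner-tree observation. The step needing the most care---and where the hypotheses on $T$ really enter---is the dictionary, specifically the claim that the sides of a non-empty polygon are exactly the edges of $\mathcal{R}$ incident to it. For this I must invoke that every side of $P_i$ is a diagonal of $T[C]$ (guaranteed by irreducibility, as noted just before the lemma) and that $P_i$ contains no diagonal of its own, so that the regions adjacent to $P_i$ across its sides are precisely its neighbours in $\mathcal{R}$. Once this correspondence is pinned down, matching ``a side $d$ has interior vertices on its outer side'' with ``the corresponding component of $\mathcal{R}$ carries a marked node'' is immediate, and the leaf argument closes the proof.
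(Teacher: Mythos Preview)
Your proposal is correct and follows essentially the same approach as the paper: both build the dual (region-adjacency) tree of $T[C]$, mark the non-empty polygons, and identify terminal polygons with the leaves of the minimal subtree spanning the marked nodes. You spell out the counting argument for why $\mathcal{R}$ is a tree and the dictionary between sides of $P_i$ and edges of $\mathcal{R}$ in more detail than the paper does, but the underlying argument is the same.
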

\begin{proof}
Since $T$ is irreducible, it must contain non-empty polygons. Consider the dual graph $G=(V,E)$ associated with $T[C]$, where the vertices of $G$ are the faces defined by $T[C]$ and two vertices are adjacent in $G$ if their corresponding faces are adjacent. Since $T[C]$ is a Hamiltonian outerplane graph, $G$ must be a tree. Note that each non-empty polygon of $T[C]$ is a vertex of $G$.

If there is only one non-empty polygon, then it is terminal. Otherwise, observe that terminal polygons correspond to the leaves of the minimal subtree of $G$ containing all the vertices corresponding to non-empty polygons. Since every non-trivial tree has at least two leaves, then the lemma follows.
\end{proof}

We are now ready to prove the main result of the paper, Theorem~\ref{the:bound}. To this end, we also need the following two lemmas. The first one was proved in~\cite{ORourke83,Shermer91} and the proof of the second one is straightforward.

\begin{lemma}[\cite{ORourke83, Shermer91}]\label{lem:diagonal}
Given a MOP $G$ of order $n\ge 10$ and a boundary edge $(u_i,u_{i+1})$ of $G$, there exists a diagonal $d$ of $G$ that partitions $G$ into two MOPs, one of which contains exactly $6,7,8$ or $9$ vertices of $G$ and does not contain $(u_i,u_{i+1})$.
\end{lemma}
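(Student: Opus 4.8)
The plan is to recast the statement in terms of the \emph{dual tree} of the triangulation that the MOP $G$ induces on its bounding polygon, and then locate the required diagonal by a short depth argument on that tree. The approach is purely combinatorial: I would translate ``find a diagonal cutting off $6$--$9$ vertices not incident to $(u_i,u_{i+1})$'' into ``find a tree edge whose far side carries the right number of triangular faces.''

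First I would build the dual tree $\mathcal{T}$, whose nodes are the $n-2$ triangular faces of $G$ and whose edges join two faces sharing a diagonal. Because $G$ is a triangulated polygon, $\mathcal{T}$ is a tree and every node has degree at most $3$, since a triangle has only three sides. The key reduction is that any diagonal $d$ splits $G$ into two MOPs sharing $d$, and if the side avoiding the edge $e=(u_i,u_{i+1})$ consists of $t$ triangles, then that MOP has exactly $t+2$ vertices. Hence the target vertex-count window $\{6,7,8,9\}$ corresponds precisely to the triangle-count window $\{4,5,6,7\}$ on the far side.

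Next I would root $\mathcal{T}$ at the unique triangle $\Delta_0$ having $e$ as one of its sides. Because $\Delta_0$ already spends one side on the boundary edge $e$, its degree in $\mathcal{T}$ is at most $2$; and every other node uses one side toward its parent, so it too has at most two children. Thus the rooted tree is binary. For a non-root node $v$ let $\mathrm{sub}(v)$ be the number of triangles in the subtree rooted at $v$; cutting along the diagonal $d$ joining $v$ to its parent yields a far-side MOP with exactly $\mathrm{sub}(v)+2$ vertices, none incident to $e$ since $\Delta_0$ lies on the other side. So it suffices to exhibit a node $v$ with $\mathrm{sub}(v)\in\{4,5,6,7\}$.

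Finally I would take $v$ to be a node of maximum depth among those with $\mathrm{sub}(v)\ge 4$; such a node exists because the root satisfies $\mathrm{sub}(\Delta_0)=n-2\ge 8$. By maximality each child of $v$ has $\mathrm{sub}\le 3$, and since $v$ has at most two children, $\mathrm{sub}(v)\le 1+2\cdot 3=7$; together with $\mathrm{sub}(v)\ge 4$ this forces $\mathrm{sub}(v)\in\{4,5,6,7\}$. As $\mathrm{sub}(\Delta_0)\ge 8>7$, the node $v$ is not the root, so $d$ is well defined and delivers the claimed MOP. The step I expect to be the crux is precisely the binary-tree claim: without the observation that rooting at $\Delta_0$ (the triangle carrying the prescribed edge $e$) forces the root to have at most two children, a node could have three children and the bound would only give $\mathrm{sub}(v)\le 1+3\cdot 3=10$, too weak to confine the size to the window $\{6,7,8,9\}$. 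Matching the root to the given boundary edge is exactly what tightens the count.
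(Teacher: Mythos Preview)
Your argument is correct. The paper does not actually prove this lemma; it quotes it from \cite{ORourke83,Shermer91} without proof, so there is no ``paper's own proof'' to compare against. Your dual-tree argument is precisely the classical technique used in those art-gallery references: pass to the weak dual (a tree of maximum degree $3$), root at the triangle carrying the distinguished boundary edge so that every node has at most two children, and then descend to a deepest node whose subtree has at least $4$ triangles. The only point worth flagging is the one you yourself identify as the crux: the root really does have degree at most $2$ because one of its three sides is the prescribed boundary edge $(u_i,u_{i+1})$, and this is what forces the binary branching needed for the bound $\mathrm{sub}(v)\le 1+2\cdot 3=7$. Everything checks: $n\ge 10$ gives $\mathrm{sub}(\Delta_0)=n-2\ge 8$, so the chosen $v$ is not the root and the diagonal to its parent is well defined; the resulting sub-MOP has $\mathrm{sub}(v)+2\in\{6,7,8,9\}$ vertices; and since $\Delta_0$ lies on the parent side, that sub-MOP does not contain the edge $(u_i,u_{i+1})$.
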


\begin{lemma}\label{lem:fn}
Let $n$, $k$, $d$ be positive integers. If $n-k\ge 5$ and $d/k\le 2/5$, then $\lfloor \frac{2(n-k)}{5}\rfloor  + d\le \lfloor \frac{2n}{5}\rfloor$.
\end{lemma}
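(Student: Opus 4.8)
The plan is to reduce the claim to two elementary facts about the floor function: its superadditivity, $\lfloor x+y\rfloor \ge \lfloor x\rfloor + \lfloor y\rfloor$, and the observation that an integer bounded above by a real number is already bounded above by the floor of that number. Neither fact requires the hypothesis $n-k\ge 5$; that hypothesis is present only to guarantee that the term $\lfloor \frac{2(n-k)}{5}\rfloor$ corresponds to a near-triangulation of admissible order to which induction can be applied in the main theorem. The inequality itself holds for all positive integers $n,k,d$ satisfying $d/k\le 2/5$.

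First I would split the right-hand side as $\frac{2n}{5} = \frac{2(n-k)}{5} + \frac{2k}{5}$ and apply superadditivity of the floor to obtain
\[
\left\lfloor \frac{2n}{5}\right\rfloor \ \ge\ \left\lfloor \frac{2(n-k)}{5}\right\rfloor + \left\lfloor \frac{2k}{5}\right\rfloor .
\]
It then suffices to show $d \le \lfloor \frac{2k}{5}\rfloor$. The hypothesis $d/k\le 2/5$ is equivalent to $d \le \frac{2k}{5}$, and since $d$ is an integer it cannot exceed the largest integer not greater than $\frac{2k}{5}$, which is exactly $\lfloor \frac{2k}{5}\rfloor$. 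Chaining the two displayed inequalities yields $\lfloor \frac{2(n-k)}{5}\rfloor + d \le \lfloor \frac{2n}{5}\rfloor$, as required.

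There is no genuine obstacle here; the single point worth stating explicitly is the justification of superadditivity, which holds because $x+y \ge \lfloor x\rfloor + \lfloor y\rfloor$ and the right-hand side is an integer, so the integer $\lfloor x+y\rfloor$ is at least $\lfloor x\rfloor + \lfloor y\rfloor$. I would also note that equality is attained, for instance whenever $5$ divides $k$ and $d=2k/5$, which confirms that $d/k\le 2/5$ is the sharp threshold for the conclusion and explains why the constant $\frac{2}{5}$ propagates correctly through the induction used to prove the main theorem.
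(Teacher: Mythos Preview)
Your argument is correct. The paper itself offers no proof of this lemma beyond the remark that it is ``straightforward,'' so there is nothing substantive to compare; your use of the superadditivity of the floor together with the fact that an integer bounded by a real is bounded by its floor is exactly the kind of routine verification the authors had in mind. Your observation that the hypothesis $n-k\ge 5$ plays no role in the inequality itself (it is only needed so that the induction hypothesis applies to the smaller near-triangulation) is also accurate.
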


\begin{theorem}\label{the:bound}
If $T=(V,E)$ is a near-triangulation of order $n\ge 5$, with boundary cycle $C=(u_1, \ldots , u_h,u_1)$, then $\gamma _t(G)\le \lfloor \frac{2n}{5}\rfloor $
except if $T$ is $H_1$ or $H_2$.
\end{theorem}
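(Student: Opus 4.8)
The plan is to run a strong induction on the order $n$, after checking a finite set of small base cases directly. For the inductive step I would split on the trichotomy recalled before the statement: $T$ is a MOP, reducible, or irreducible. If $T$ is a MOP the bound is exactly Theorem~\ref{the:MOPs}, so that case is immediate.

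If $T$ is reducible, I would delete boundary edges one at a time, each time removing an edge $(u_i,u_{i+1})$ whose triangular face has an interior apex. Every such deletion fixes $n$, strictly lowers the number of interior vertices, and cannot decrease $\gamma_t$ (a TDS of $T-e$ is a TDS of $T$), so $\gamma_t(T)\le\gamma_t(\widehat T)$ for the near-triangulation $\widehat T$ reached when no reduction remains. That $\widehat T$ is a MOP or irreducible. If it is a MOP different from $H_1,H_2$, Theorem~\ref{the:MOPs} finishes it; if $\widehat T\in\{H_1,H_2\}$ (so $n=12$), then the near-triangulation $\widetilde T$ obtained one step earlier has at least one interior vertex and satisfies $\widetilde T-e\in\{H_1,H_2\}$, so Lemma~\ref{lem:ExceptionCases}(II) yields a TDS of $\widetilde T$ of size $4=\lfloor 2\cdot 12/5\rfloor$, whence $\gamma_t(T)\le 4$. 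Thus it suffices to establish the bound for irreducible near-triangulations.

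For an irreducible $T$ of order $n\ge 7$ I would fix a terminal polygon $P$ via Lemma~\ref{lem:terminal}. Let $d^\ast$ be the unique diagonal side of $P$ for which $T_{out}(P,d^\ast)$ still contains interior vertices (if none exists, $P$ is the only non-empty polygon); every other diagonal side of $P$ carries a MOP, and $P$ has no diagonals and at least one interior vertex, so Lemma~\ref{lem:RemovingPoints} applies to $P$. The aim is to isolate, inside the ``tip'' $T_{in}(P,d^\ast)$, a block $B$ bounded by a single diagonal $d$ of $T$, of between $5$ and $9$ vertices, that is a MOP; then $T'=T_{out}(P,d)$ is a near-triangulation of strictly smaller order on which I would recurse. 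Concretely I would either cut such a block out of a large attached MOP with Lemma~\ref{lem:diagonal}, or, when the interior vertices of $P$ obstruct a clean MOP cut, first contract a contractible edge incident to a boundary vertex (guaranteed by Lemma~\ref{lem:ContractibleEdges}) and pass to $T/e$ through Lemma~\ref{StandardLemma}, reducing $n$ by one at controlled cost. The block $B$ is then totally dominated by $2$ (or $3$) vertices supplied by Lemma~\ref{lem:pentagon}, Lemma~\ref{lem:hexagon} or Theorem~\ref{the:MOPs}, chosen to contain a prescribed endpoint of $d$ so that the two partial solutions glue into a TDS of $T$; the final inequality is closed by Lemma~\ref{lem:fn} (for instance a MOP block of $7$ vertices removes $k=5$ vertices at cost $d=2$, with $d/k=2/5$). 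Whenever $T'$, or a graph produced by a contraction, happens to be $H_1$ or $H_2$, I would replace the induction hypothesis by the matching case of Lemma~\ref{lem:ExceptionCases}(I), which supplies a size-$5$ TDS through the needed vertex.

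The hard part will be the terminal-polygon case analysis itself: proving that in every configuration of $P$ (number of diagonal sides, sizes of the attached MOPs, placement of the interior vertices) one can always extract a block of $5$--$9$ vertices with total-domination cost at most $2/5$ per removed vertex while leaving a genuine smaller near-triangulation, and simultaneously controlling the boundary condition of \emph{total} domination across the cut, since each endpoint of $d$ must be dominated from one of the two sides. The most delicate subcases are precisely those in which the extracted block, its complement, or an intermediate contraction coincides with $H_1$ or $H_2$; there ordinary induction is unavailable, and the entire purpose of Lemmas~\ref{lem:ExceptionCases} and~\ref{StandardLemma} is to furnish, in each such situation, a dominating set of the prescribed size containing the prescribed vertex. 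Verifying uniformly that these tailored sets always exist and always glue correctly is where the real effort lies.
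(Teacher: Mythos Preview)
Your overall strategy matches the paper's: MOPs via Theorem~\ref{the:MOPs}, peel boundary edges in the reducible case, and in the irreducible case work at a terminal polygon from Lemma~\ref{lem:terminal}. Two points, however, are not yet under control.

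First, strong induction on $n$ alone does not close. In two subcases of the terminal-polygon analysis (an attached MOP $M_j$ of order $4$, and the configuration where every $M_j$ has order $3$) the paper does \emph{not} pass to a smaller graph: it deletes two boundary vertices and reattaches two dummy vertices in the outer face so as to force a prescribed boundary vertex into any TDS, obtaining a \emph{reducible} near-triangulation of the same order $n$. The induction hypothesis applies to it only because reducing that graph drops the number of interior vertices. Under your scheme (reduce all the way, then shrink $n$ in the irreducible case) this loops: the irreducible residue can again have order $n$. The paper therefore inducts lexicographically on $(m,n)$ with $m$ the number of interior vertices; you need that second parameter, or a separate argument that those two subcases admit a genuine drop in $n$ within the $2/5$ budget---and a direct count (remove $2$ or $3$ vertices, pay $2$ dominators) fails for most residues of $n\bmod 5$.

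Second, ``isolate a MOP block of $5$--$9$ vertices across one diagonal, or else contract one edge'' covers only the easy half of the analysis, namely when some $|M_j|\in\{6,7,8\}$, or $|M_j|\ge 10$ (Lemma~\ref{lem:diagonal}), or $|M_j|=9$ with $d_j$ contractible. When every $M_j$ has order in $\{3,5\}$, or order $9$ with $d_j$ non-contractible, no such single-diagonal block exists, and a lone edge contraction costs $+1$ against a drop of $1$, which is never within budget on its own. In these residual configurations the paper removes vertices from \emph{several} adjacent $M_j$ at once \emph{and} deletes one or more vertices of the polygon $P$ itself---this is precisely where Lemma~\ref{lem:RemovingPoints} enters---so that the ratio of added dominators to removed vertices meets Lemma~\ref{lem:fn}. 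Your outline does not yet contain this mechanism, and it accounts for six of the paper's twelve cases.
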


\begin{proof}
By convenience, we define $f(n)$ as $\lfloor \frac{2n}{5}\rfloor $. Thus, $f(n-k)+d \le f(n)$ if $n-k\ge 5$ and $d/k\le 2/5$.

We proceed by induction on the number $m$ of interior vertices of $T$ and the number $n$ of vertices of $T$.
For $m=0$, the base of the induction, $T$ is a MOP and the result is true by Theorem~\ref{the:MOPs}.

Let $T$ be a near-triangulation of order $n$, with $m>0$ interior vertices and boundary cycle $C=(u_1, \ldots , u_h,u_1)$. Suppose that $\gamma _t(T')\le f(n')$ for any near-triangulation $T'$ of order $n'\ge 5$ such that either $T'$ is different from $H_1,H_2$ and contains $m'<m$ interior vertices, or $T'$ contains $m'=m$ interior vertices and $n'<n$. We need to prove that $\gamma _t(T)\le f(n)$.

To make further reasoning easier, we prove the following claim.

\begin{claim}\label{Claim}
Let $T$ be a near-triangulation of order $n\ge 6$, with $m$ interior vertices and with boundary cycle $C=(u_1, \ldots , u_h,u_1)$. Assume that the previous induction hypotheses hold, that is, $\gamma _t(T')\le f(n')$ for any near-triangulation $T'$ of order $n'\ge 5$ such that either $T'$ is different from $H_1,H_2$ and contains $m'<m$ interior vertices, or $T'$ contains $m'=m$ interior vertices and $n'<n$. For any vertex $u_i\in C$, there exists a dominating set $D$ of size at most $f(n-1)+1$ such that $D$ contains $u_i$ and all the vertices of $T$ are dominated except possibly $u_i$.
\end{claim}
\noindent{\em Proof of the claim}.
Assume that $T$ is neither $H_1$ nor $H_2$. By Lemma~\ref{lem:ContractibleEdges}, there is always a contractible edge $e=(u_i,v_i)$ with $u_i$ as one of its endpoints. Note that $T/e$ has either fewer interior vertices than $T$ or the same number of interior vertices but $n-1$ vertices. Thus, if $T/e$ is not $H_1$ or $H_2$, then $\gamma_t(T/e) \le f(n-1)$ by the induction hypotheses. In this case, the result follows from Lemma~\ref{StandardLemma}(II). If $T/e$ is $H_1$ or $H_2$, then the order of $T$ is $13$ and the result follows from  Lemma~\ref{lem:ExceptionCases}(iv), since $f(12)+1=5$. Finally, if $T$ is $H_1$ or $H_2$, then  Lemma~\ref{lem:ExceptionCases}(i) ensures the result because $f(11)+1=5$. \hfill $\square$

\vskip 0.25 cm
Let us go into the details of the proof of the theorem. Assume first that $T$ is reducible. Hence, by removing a suitable boundary edge $(u_i,u_{i+1})$ we obtain a near-triangulation $T'$ of order $n$ with $m-1$ interior vertices. If $T'$ is $H_1$ or $H_2$, then $T$ has 12 vertices and Lemma~\ref{lem:ExceptionCases}(II) guarantees that $\gamma _t(T) = 4 = f(12)$. Otherwise, the induction hypothesis can be applied to $T'$, so $\gamma _t(T) \le \gamma _t(T') \le f(n)$.

Assume then that $T$ is irreducible, hence $n\ge 7$ and $T$ contains at least one terminal polygon $P$ by Lemma~\ref{lem:terminal}, with $k\ge 3$ sides $d_1=(u'_1,u'_2),d_2=(u'_2,u'_3),\ldots ,d_k=(u'_k,u'_1)$. Note that the vertices $u'_1, \ldots , u'_k$ of $P$ correspond to vertices in $C$ and that are in clockwise order. Without loss of generality, we may assume that $u'_1=u_1$. For $j=1,\ldots ,k$, every near-triangulation $T_{out}(P,d_j)=M_j$ is a MOP, except possibly one of them, say $T_{out}(P,d_k)=M_k$. Let $\overline{M_j}$ denote the near-triangulation $T_{in}(P,d_j)$, so $|M_j|+|\overline{M_j}|=n+2$, where $| \cdot |$ is the number of vertices of a graph.
Observe that, since $P$ is non-empty and has no diagonals,   $\overline{M_j}$ is a reducible near-triangulation for $j=1,\ldots ,k$, because $d_j$ can be removed from $\overline{M_j}$ (see Figure~\ref{fig:TerminalPolygon}).

We prove that $\gamma_t(T)\le f(n)$ by applying induction to a suitable near-triangulation obtained after some graph operations.
We distinguish cases according to the sizes of the MOPs $M_j$.

\vskip 0.5 cm

\noindent
{\bf Removing vertices from one MOP}

\vskip 0.5 cm

We begin analyzing the cases when there is a MOP $M_j$ such that either $|M_j|\in \{ 4,6,7,8\}$, or $|M_j|=9$ and $d_j$ is contractible in $\overline{M_j}$, or $|M_j|>9$.
These cases are the same as those described in~\cite{Lemanska17}, except for the case $|M_j|=4$, and the analysis is totally analogous. For the sake of completeness, we include them. Note that $\overline{M_j}$ contains interior vertices, so it is neither $H_1$ nor $H_2$, and has at least 6 vertices (because $T$ is irreducible). Therefore, the induction hypothesis can be applied on $\overline{M_j}$ if necessary.

\vskip 0.2 cm
\noindent
{\bf Case 1:} $|M_j|=4$.
\vskip 0.2 cm

Suppose that there is a MOP $M_j$ of order 4 ($M_5$ in Figure~\ref{fig:TerminalPolygon}). One of $u'_j$ or $u'_{j+1}$ is a dominating set of $M_j$ (the vertex $u'_6$ in Figure~\ref{fig:TerminalPolygon}). Suppose that $u'_{j+1}$ is such a vertex (the same reasoning can be applied in the other case). Note that $\overline{M_j}$ has $n-2$ vertices and is reducible because the edge $(u'_j,u'_{j+1})$ can be removed from $\overline{M_j}$. Let $(u'_{j+1},u_i)$ be the other boundary edge of $\overline{M_j}$ incident with $u'_{j+1}$.

From $\overline{M_j}$, we build another reducible near-triangulation $\overline{M_j}'$ of order $n$, by adding two vertices $w_1$ and $w_2$ and the edges $(u'_{j+1},w_1), (u'_{j+1},w_2), (w_2,w_1)$  and $(u_i,w_2)$ in the outer face, that is, a MOP of order 4 is joined to the edge $(u'_{j+1},u_i)$. Since $\overline{M_j}'$ is reducible, the induction hypothesis can be applied to $\overline{M_j}'$, so it has a TDS $D$ of size at most $f(n)$. Recall that Lemma~\ref{lem:ExceptionCases}(II) guarantees the same bound for $D$, even in the case that either $H_1$ or $H_2$ is obtained after the reduction.

From $D$, we build as follows another TDS $D'$ of $\overline{M_j}'$ such that $|D'| \le f(n)$, $D'$ contains $u'_{j+1}$ and does not contain either $w_1$ or $w_2$. The degree of $w_1$ in $\overline{M_j}'$ is 2, hence at least one of $u'_{j+1}$ and $w_2$ must belong to $D$ so that $w_1$ is dominated. Suppose that $u'_{j+1}$ belongs to $D$. If neither $w_1$ nor $w_2$ belongs to $D$, we are done. Otherwise, since the neighbors of $w_1$ and $w_2$ are also neighbors of $u'_{j+1}$,  by removing $w_1$ and $w_2$ from $D$ (at least one belongs to $D$) and by adding a neighbor of $u'_{j+1}$ to $D$ (if no neighbor of $u'_{j+1}$ different from $w_1$ and $w_2$ belongs to $D$), we obtain such a set $D'$. On the contrary, suppose that $u'_{j+1}$ does not belong to $D$ but $w_2$ does. Thus, by removing $w_2$ from $D$ and by adding $u'_{j+1}$ to $D$ (and removing $w_1$ and adding a neighbor of $u'_{j+1}$ different from $w_1$ and $w_2$ if $w_1$ belongs to $D$), such a set $D'$ is obtained. Since $u'_{j+1}$ dominates the vertices of $M_j$, then $D'$ is a TDS of $T$ and $\gamma _t(T)\le f(n)$.

\vskip 0.2 cm
\noindent
{\bf Case 2:} $|M_j|=6$.
\vskip 0.2 cm

Suppose that there is a MOP $M_j$ of order 6 ($M_3$ in Figure~\ref{fig:TerminalPolygon}). Since $M_j$ is a triangulated hexagon, by Lemma~\ref{lem:hexagon}, either $u'_j$ and one of its neighbors, or $u'_{j+1}$ and one of its neighbors form a TDS of the triangulated hexagon $M_j$. Assume that $\{u'_j,u\}$ is such a set (the other case is analyzed in the same way). By Claim~\ref{Claim}, $\overline{M_j}$ has a set $D$ of size at most $f(n-5)+1$ containing the vertex $u'_j$ and dominating all the vertices of $\overline{M_j}$ except possibly  $u'_j$. But then, the set $D\cup \{ u\} $ is a TDS of $T$ with size at most $f(n-5)+2=f(n)$.

\begin{figure}[tb]
\centering
\includegraphics[scale=0.50,page=18]{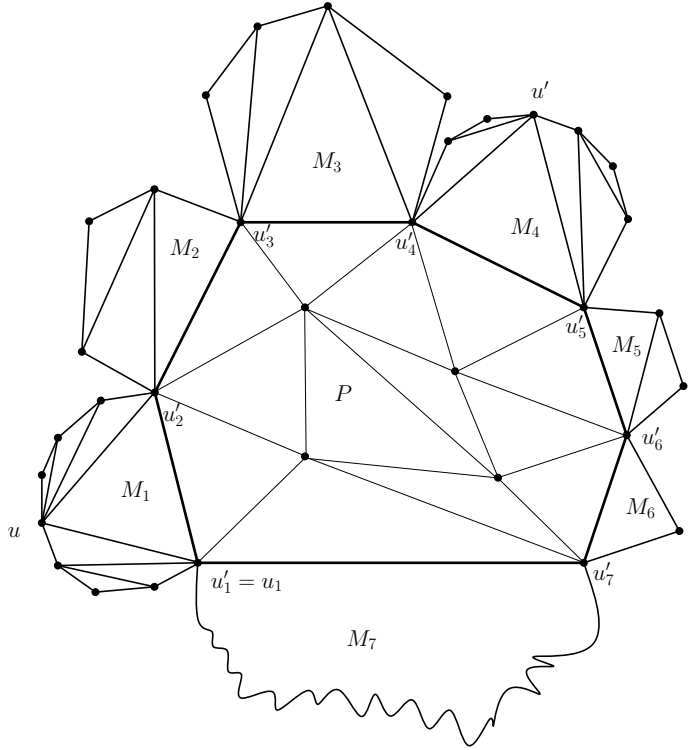}
\caption{A terminal $7$-gon $P$ with 6 MOPs $M_1, M_2, M_3, M_4, M_5$ and $M_6$ of orders $9,5,6,8,4$ and $3$, respectively, around it.
}\label{fig:TerminalPolygon}
\end{figure}

\vskip 0.2 cm
\noindent
{\bf Case 3:} $|M_j|=7$.
\vskip 0.2 cm

Suppose that there is a MOP $M_j$ of order 7. In this case, a TDS of $\overline{M_j}$ has size at most $f(n-5)$ by the induction hypothesis. This set can be transformed into a TDS of $T$ by adding a TDS of $M_j$ that consists of two vertices by Theorem~\ref{the:MOPs}. Therefore, $f(n-5)+2 = f(n)$, so $\gamma_t(T) \le f(n)$.

\vskip 0.2 cm
\noindent
{\bf Case 4:} $|M_j|=8$.
\vskip 0.2 cm

Suppose that there is a MOP $M_j$ of order 8 ($M_4$ in Figure~\ref{fig:TerminalPolygon}). Let $\{u'_j=u_k, \ldots ,$ $u_{k+7}=u'_{j+1}\}$ denote the vertices of $M_j$. Let $\Delta=(u'_j,u'_{j+1},u')$ be the triangle adjacent to the edge $(u'_j,u'_{j+1})$ in $M_j$. If $u'$ is $u_{k+1}$, $u_{k+2}$, $u_{k+5}$ or $u_{k+6}$, then either $(u'_j,u')$ or $(u'_{j+1},u')$ defines a MOP of order 6 or 7, and we can argue as in Cases 2 or 3, respectively.

Assume that $u'=u_{k+3}$ (the case $u'=u_{k+4}$ is symmetric). By removing the vertices $u_{k+1},u_{k+2},u_{k+4},u_{k+5},u_{k+6}$ from $T$, we obtain a new near-triangulation $T'$ of order $n-5\ge 7$ and $m$ interior vertices. By the induction hypothesis, $T'$ has a TDS $D'$ of size at most $f(n-5)$ that necessarily contains either $u'_j$ or $u'_{j+1}$ since the degree of $u'$ in $T'$ is 2.

If $D'$ contains $u'_j$, then by adding $u'$ and a suitable vertex $v$ adjacent to $u'$ in the triangulated pentagon $\{u',u_{k+4}, u_{k+5}, u_{k+6}, u'_{j+1}\}$,
we obtain a TDS of $T$ with size at most $f(n-5)+2 =f(n)$. If $D'$ contains $u'_{j+1}$, applying Lemma~\ref{lem:pentagon} to the triangulated pentagons $\{u'_{j+1},u', u_{k+4}, u_{k+5}, u_{k+6}\}$ and $\{u'_{j+1},u'_j, u_{k+1}, u_{k+2}, u'\}$, we can then obtain a TDS in $T$ of size at most $f(n-5)+2$, by adding one additional vertex in each one of these two triangulated pentagons.

\vskip 0.2 cm
\noindent
{\bf Case 5:} $|M_j|=9$ and $d_j$ is contractible in $\overline{M_j}$.
\vskip 0.2 cm

Suppose that there is a MOP $M_j$ of order 9 ($M_1$ in Figure~\ref{fig:TerminalPolygon}).
Let $\Delta=(u'_j,u'_{j+1},u)$ be the triangle adjacent to the edge $(u'_j,u'_{j+1})$ in $M_j$ and let $\{u'_j=u_k, \ldots ,$ $u_{k+8}=u'_{j+1}\}$ denote the vertices of $M_j$. If $u$ is $u_{k+1}$, $u_{k+2}$, $u_{k+3}$, $u_{k+5}$, $u_{k+6}$ or $u_{k+7}$, then either $(u'_j,u)$ or $(u'_{j+1},u)$ defines a MOP of order 6, 7 or 8, and we can argue as in Cases 2, 3 or 4, respectively.

Assume that $u=u_{k+4}$. In this case, the sets of vertices $\{u'_j, u_{k+1}, u_{k+2}, u_{k+3}, u\}$ and $\{u, u_{k+5}, u_{k+6}, u_{k+7}, u'_{j+1}\}$ induce two triangulated pentagons. Since $d_j$ is contractible in $\overline{M_j}$, then $\overline{M_j}/ d_j$ is a near-triangulation of order $n-8\ge 5$ with $m$ interior vertices. Thus, $\overline{M_j}/ d_j$ is different from $H_1,H_2$ and has a TDS of size at most $f(n-8)$ by the induction hypothesis.

As a consequence, by Lemma~\ref{StandardLemma}(I), $\overline{M_j}$ has either a TDS $D$ of size at most $f(n-8)+1$ containing $u'_j$ and $u'_{j+1}$, or a set $D$ of size at most $f(n-8)$, not containing either $u'_j$ or $u'_{j+1}$, and dominating every vertex of $\overline{M_j}$ except possibly $u'_j$ or $u'_{j+1}$. In the first case, by  Lemma~\ref{lem:pentagon} we can add to $D$ a suitable vertex in each one of the two previous triangulated pentagons, so that the resulting set is a TDS of $T$ of size at most $f(n-8)+3\le f(n)$. In the second case, by Theorem~\ref{the:MOPs}, there is a TDS $D'$ of size 3 in $M_j$. Therefore, $D\cup D'$ is a TDS in $T$ of size $f(n-8)+3 \le f(n)$.

\vskip 0.2 cm
\noindent
{\bf Case 6:} $|M_j|>9$.
\vskip 0.2 cm

Suppose that there is a MOP $M_j$ of order greater than 9. By Lemma~\ref{lem:diagonal}, there is a diagonal $d$ in $M_j$ such that it partitions $M_j$ into two MOPs, one of which, $M'$, has $6,7,8$ or $9$ vertices and does not contain the edge $(u'_j,u'_{j+1})$. Therefore, we can also argue as in Cases 2, 3, 4 and 5 by removing $M'$ from $T$, since $d$ is contractible in the near-triangulation obtained after removing $M'$.

\vskip 0.5 cm

\noindent
{\bf Removing vertices from two or more MOPs}

\vskip 0.5 cm

We now study irreducible near-triangulations where all MOPs $M_j$ are of order $3,5$ or $9$. Besides, the case of a MOP $M_j$ of order 9 must be analyzed only when $d_j$ is not contractible in $\overline{M_j}$. In this situation,
we have to remove vertices from more than one MOP. Most of the cases can be solved by removing vertices from two consecutive MOPs $M_j$ and $M_{j+1}$ around
the terminal polygon $P$. We recall that $M_k$ can be a MOP or not. If it is not a MOP, then $|M_k|\ge 6$  because $T$ is irreducible and the graph $H$ shown in Figure~\ref{fig:NonreducibleBasic} without a vertex of degree 2 is the simplest graph that can be adjacent to $d_k$.
If it is a MOP, we can assume that it is the largest one, among all MOPs $M_j$ adjacent to $P$ (by renumbering them if necessary).

If there exist at least two MOPs of different sizes, then we can assume that there are two consecutive MOPs $M_j$ and $M_{j+1}$ such that $\{|M_j|,|M_{j+1}|\}$ are either $\{5,3\}$, or $\{9,3\}$, or $\{9,5\}$
Otherwise, all the MOPs are of order either $3$ or $5$ or $9$.
For the sake of clarity and since, as we will see, the reasoning used in the proof holds for every pair of consecutive MOPs of different order, we assume that these MOPs of different order, whenever they exist, are $M_1$ and $M_2$ and that $|M_1| > |M_2|$.

Let $\overline{M}$ denote the near-triangulation obtained by removing from $T$ the vertices of $M_1$ and $M_2$ that are not in $P$. Hence, $|\overline{M}|=n-(|M_1|-2)-(|M_{2}|-2)$ and $(u'_1,u'_2)$ and $(u'_2,u'_3)$ are boundary edges of $\overline{M}$.
Observe that $|\overline{M}| \ge 2+|M_k|$, since $P$ is at least a triangle containing at least one interior vertex and $M_k$ is included in $\overline{M}$.
Therefore, the induction hypothesis can be applied to $\overline{M}$ when necessary, since $|\overline{M}| \ge 5$ and it is neither $H_1$ nor $H_2$ ($\overline{M}$ contains interior vertices).
Next, we analyze all possible combinations of the sizes of $M_j$'s.

\vskip 0.2 cm
\noindent
{\bf Case 7:} $|M_1|=5$ and $|M_2|=3$.
\vskip 0.2 cm

Since $M_1$ is a triangulated pentagon, $M_1$ has a TDS formed by the vertex $u'_2$ and one of its neighbors $u'$ by Lemma~\ref{lem:pentagon} (see Figure~\ref{fig:case8}). Besides, $P$ does not contain diagonals, so there is no diagonal incident to $u'_2$ in $\overline{M}$. By Lemma~\ref{lem:deletion}, $\overline{M}-\{ u'_2\} $ is a near-triangulation of order $n-5$. Recall that if $M_k$ is a MOP, then $|M_k|\ge 5$ and if it is not a MOP, then $|M_k|\ge 6$. 
As a consequence,  $n-5 =|\overline{M}| -1 \ge |M_k| + 1 \ge 6$, because $P$ contains an interior vertex, and the induction hypothesis can be applied on $\overline{M}-\{ u'_2\} $.

Suppose that $\overline{M}-\{ u'_2\} $ is neither $H_1$ nor $H_2$, so it has a TDS $D$ of size at most $f(n-5)$ by the induction hypothesis. Thus, $D\cup \{ u'_2,u'\} $ is a TDS of $T$ of size at most $f(n-5)+2 = f(n)$. On the contrary, if $\overline{M}-\{ u'_2\} $ is either $H_1$ or $H_2$, then  Lemma~\ref{lem:ExceptionCases}(ii) guarantees that $\overline{M}$ has a TDS $D'$ of size 5 containing $u'_2$.  Therefore, $D'\cup \{ u'\} $ is a TDS in $T$ of size 6, so $\gamma_t(T) \le f(n)$ since the order of $T$ is $17$ and $f(17) = 6$.

\vskip 0.2 cm
\noindent
{\bf Case 8:} $|M_1|=9$, $|M_2|=3$ and $d_1=(u'_1,u'_2)=(u_1,u_9)$ is not contractible.
\vskip 0.2 cm

Arguing as in Case 5, we may assume that $\Delta=(u_1,u_9,u_5)$ is the triangle adjacent to the edge $(u_1,u_9)$ in $M_1$, because otherwise a MOP of order 6,7 or 8 could be removed. Thus, the vertices $\{u_1, u_2, u_3, u_4, u_5\}$ and $\{u_5, u_6, u_7, u_8, u_9\}$ induce two triangulated pentagons, $P'$ and $P''$, respectively (Figure~\ref{fig:case9}). Applying Lemma~\ref{lem:pentagon} to $P'$ and $P''$, there exist two vertices $u'\in P'$ and $u''\in P''$ such that $\{u_5,u',u''\}$ is a TDS of $M_1$.

Since $d_1=(u'_1,u'_2)$ is not contractible in $\overline{M_1}$, then it is also not contractible in the near-triangulation $T'$ induced by the vertices of the terminal polygon $P$ and the vertices inside $P$. $T'$ has no diagonals, hence there exists a vertex $v_2$ inside $P$ by Lemma~\ref{lem:RemovingPoints}(iii), such that $v_2$ is adjacent to $u'_2=u_9$ and $T'-\{ u'_2,v_2\} $ is a near-triangulation. As a consequence, $\overline{M}-\{ u'_2,v_2\} $ is a near-triangulation of order $n-10\ge 7$ (recall that $|M_k|\ge 6$). If $\overline{M}-\{ u'_2,v_2\} $ is neither $H_1$ nor $H_2$, then it has a TDS $D$ of size at most $f(n-10)$ by the induction hypothesis. Thus, $D\cup \{ u_5,u',u'',u'_2\} $ is clearly a TDS of $T$ with size at most $f(n-10)+4=f(n)$, so $\gamma_t(T)\le f(n)$. On the contrary, if $\overline{M}-\{ u'_2,v_2\} $ is either $H_1$ or $H_2$, then $n=22$ and Lemma~\ref{lem:ExceptionCases}(I) (iii) ensures that there exists a TDS  $D$ containing $u'_2$ of size $5$ in $\overline{M}$. The set $D\cup \{ u_5,u',u''\} $ is a TDS of $T$ with size $8=f(22)$.

\begin{figure}[ht]
	\centering
	\subfloat[]{
		\includegraphics[scale=0.56,page=19]{img.pdf}
		\label{fig:case8}
	}~
	\subfloat[]{
		\includegraphics[scale=0.56,page=20]{img.pdf}
		\label{fig:case9}
	}~
	\subfloat[]{
		\includegraphics[scale=0.56,page=21]{img.pdf}
		\label{fig:case10}
	}\\
	\subfloat[]{
		\includegraphics[scale=0.56,page=22]{img.pdf}
		\label{fig:case11}
	}~
	\subfloat[]{
		\includegraphics[scale=0.56,page=23]{img.pdf}
		\label{fig:case12}
	}
	\caption{
(a) Case 7: $u'_2$ and $u'$ define a TDS in $M_1$. (b) Case 8: $u_5, u'$ and $u''$ form a TDS in $M_1$. (c) Case 9: $u'_1, u'_2, u', u''$ and $u'''$ are a TDS in $M_1\cup M_2$. (d) Case 11: $u_5, u', u'', w, w'$ and $w''$ form a TDS in $M_1\cup M_2$. (e) Case 12: Removing the MOPs $M_1, M_2, M_3$ and $M_4$, and the vertices $u'_2, u'_3, u'_4$ and $v'_4$ to obtain the near-triangulation $\overline{M}$. The squared vertices form a TDS of $\{M_1\cup M_2\}\cup \{M_3\cup M_4\}$.
		}
	\label{fig:2MOPcases}
\end{figure}

\vskip 0.2 cm
\noindent
{\bf Case 9:} $|M_1|=9$, $|M_2|=5$ and $d_1=(u'_1,u'_2)=(u_1,u_9)$ is not contractible.
\vskip 0.2 cm

Arguing as in Case 8, we may assume that $\Delta=(u_1,u_9,u_5)$ is the triangle adjacent to the edge $(u_1,u_9)$ in $M_1$ (so  $\{u_1, u_2, u_3, u_4, u_5\}$ and $\{u_5, u_6, u_7, u_8, u_9\}$ induce two triangulated pentagons, $P'$ and $P''$), and that $\overline{M}-\{ u'_2,v_2\} $ is a near-triangulation of order $n-12\ge 7$.

By Claim~\ref{Claim}, $\overline{M}-\{ u'_2,v_2\} $ has a set $D$ of size $\le f(n-13)+1$ containing the vertex $u_1$ and dominating all the vertices of $\overline{M}-\{ u_9,v_2\} $ except possibly $u_1$. We add $u'_2$ to $D$ and, by Lemma~\ref{lem:pentagon}, we can also add to $D$ a vertex $u'$ to dominate $P'$, a vertex $u''$ to dominate $P''$ and a vertex $u'''$ to dominate $M_2$ (see Figure~\ref{fig:case10}). Therefore, $D\cup \{u'_2,u',u'',u'''\}$ is a TDS of $T$ with size at most $f(n-13)+5\le f(n)$.

\vskip 0.2 cm
\noindent
{\bf Case 10:} All MOPs $M_j$ are of order 3, so $|M_1|=|M_2|=3$.
\vskip 0.2 cm

This case is similar to Case 1. $\overline{M}$ is reducible (any of $(u'_1,u'_2)$ and $(u'_2,u'_3)$ can be removed), hence the graph $\overline{M}'$ of order $n$, obtained from $\overline{M}$ by adding two vertices $w_1,w_2$ in the outer face and the edges $(u'_2,w_1),(u'_2,w_2),(w_2,w_1),(u'_3,w_2)$, is also reducible by removing $(u'_1,u'_2)$. Arguing as in Case 1, $\overline{M}'$ has a TDS $D'$ of size at most $f(n)$ containing the vertex $u'_2$ and not containing either $w_1$ or $w_2$, even in the case that $\overline{M}'$ is reducible to either $H_1$ or $H_2$\footnote{In fact, a detailed analysis of cases shows that $\overline{M}'$ cannot be either $H_1$ or $H_2$.}. Therefore, $\gamma_t(T)\le f(n)$ since $D'$ is also a TDS of $T$.

\vskip 0.2 cm
\noindent
{\bf Case 11:} All MOPs $M_j$ are of order 9 and all $d_j$ are not contractible.
\vskip 0.2 cm

We have $|M_1| = |M_2| = 9$ and $d_1$ and $d_2$ are not contractible. As in case 8, we may assume that $\Delta=(u_1,u_9,u_5)$ is the triangle adjacent to the edge $(u_1,u_9)$ in $M_1$, so $\{u_1, u_2, u_3, u_4, u_5\}$ and $\{u_5, u_6, u_7, u_8, u_9\}$ induce two triangulated pentagons, $P'$ and $P''$. Therefore, there exist two vertices $u'\in P'$ and $u''\in P''$ such that $D_1=\{u_5,u',u''\}$ is a TDS of $M_1$. The same happens in $M_2$, so $M_2$ has a TDS $D_2=\{w,w',w''\}$ of size 3 (see Figure~\ref{fig:case11}).

Since $P$ contains no diagonals, $\overline{M}'=\overline{M}-\{ u_9\} $ is a near-triangulation of order $n-15\ge 7$ by Lemma~\ref{lem:deletion}. We claim that $\overline{M}'$ is  neither $H_1$ nor $H_2$. We recall that $\overline{M}'$ must contain $M_k$. If $M_k$ is not a MOP, then it contains interior vertices, so $\overline{M}'$ is  neither $H_1$ nor $H_2$. Assume to the contrary that $M_k$ is a MOP, so $|M_k|\ge 9$ by hypothesis, and that $\overline{M}'$ is $H_1$ (the same reasoning applies if $\overline{M}'$ is $H_2$). $P$ is terminal, hence some vertices of $H_1$ must be interior vertices in $\overline{M}$, implying that $d_k$ is a diagonal of $H_1$. Thus, by the symmetry of $H_1$ (see Figure~\ref{fig:ExceptionCases}), $d_k$ can only be one of the edges $(3,7),(3,6)$ and $(4,6)$. If $d_k$ is $(3,6)$ or $(4,6)$, then it defines a MOP of size at least 10 and we are in Case 6. If $d_k$ is $(3,7)$, then it defines a MOP of size 9, where $(3,7)$ would be  contractible in $\overline{M_k}$ and we would be in Case 5. Hence, $\overline{M}'$ is  neither $H_1$ nor $H_2$.

As a consequence, $\overline{M}'$ has a total dominating set $D$ of size at most $f(n-15)$ by the induction hypothesis. Therefore, $D\cup D_1 \cup D_2$ is a TDS in $T$ of size at most $f(n-15)+6= f(n)$.

\vskip 0.2 cm
\noindent
{\bf Case 12:} All MOPs $M_j$ are of order 5.
\vskip 0.2 cm

The case $|M_j|=5$ for every MOP $M_j$ is the only case left. We recall that $(u'_1,u'_2),$ $,\ldots , (u'_k,u'_1)$ denote the diagonals $d_1, \ldots , d_k$ of $T$ defining the terminal polygon $P$, and that $M_k$ can also be a MOP when $P$ is the only non-empty polygon of $T$. If it is the case, then $M_k$ must also have 5 vertices. Next, we explain how to obtain a TDS of size at most $f(n)$, by removing vertices from several consecutive MOPs.

Let $T'$ be the near-triangulation induced by $P$ and its interior vertices.
We distinguish whether $T'$ has one interior vertex or more than one.

Assume first that $T'$ has at least two interior vertices.  By  Lemma~\ref{lem:RemovingPoints}(ii), there is a vertex $u'_j$ in $T'$, $2\le j<k$, and an interior vertex $v'_j$ adjacent to $u'_j$ such that the graph $T'-\{ u'_2,\ldots ,u'_j,v'_j\} $ is a near-triangulation. As a consequence, by removing the vertices in the MOPs $M_1,M_2,\ldots ,M_{j}$ that do not belong to $P$, and the vertices $u'_2,u'_3,\ldots ,u'_j,v'_j$, we obtain a near-triangulation $\overline{M}'$ of size $|\overline{M}'|=n-3j-j=n-4j\ge 6$ (see Figure~\ref{fig:case12}). Since every MOP $M_i$ is a triangulated pentagon, observe that the vertex $u'_i$,  $2\le i\le j$, a neighbor $v_{i-1}$ of $u'_i$ in $M_{i-1}$ and another neighbor $v_i$ of $u'_i$ in $M_i$, form a TDS of size 3 of $M_{i-1}\cup M_i$.

\begin{figure}[ht]
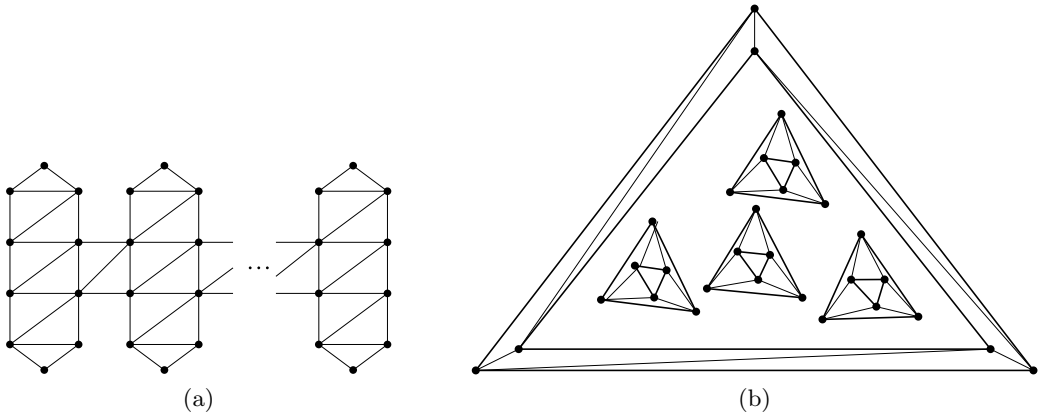

	\centering
	\subfloat[]{
		\includegraphics[scale=0.80,page=24]{img.pdf}
		\label{fig:tightMOP}
	}~~~~~
	\subfloat[]{
		\includegraphics[scale=0.50,page=25]{img.pdf}
		\label{fig:tight}
	}
	\caption{
		(a) A MOP $T$ of order $n$ such that $\gamma_t(T) = \lfloor \frac{2n}{5}\rfloor$. Any TDS must contain at least two vertices of each MOP of order 5. (b) Triangulating in any way the inter-octahedra region, a triangulation $T$ of order $n$ is obtained such that $\gamma_t(T) = \lfloor \frac{n}{3}\rfloor$.
	}
	\label{fig:lowerbounds}
\end{figure}

Suppose that $j$ is an even number. If $\overline{M}'$ is neither $H_1$ nor $H_2$,  it contains a TDS  $D$ of size at most $f(n-4j)$ by the induction hypothesis. If $\overline{M}'$ is either $H_1$ or $H_2$, by Lemma~\ref{lem:ExceptionCases} (I) (ii) there exists a TDS $D$ in $\overline{M}'\cup \{v'_j\}$ of size 5 containing $v'_j$. Therefore, the set $D$ together with the 3-vertex sets $\{v_{i-1},u'_i,v_i\}$, for $i=2,4,\ldots ,j$, form a TDS of $T$  with size at most $f(n-4j)+3j/2$ in the first case and with size $5+3j/2$ in the second case. By Lemma~\ref{lem:fn}, $f(n-4j)+3j/2\le f(n)$ because $\frac{3j/2}{4j}< \frac{2}{5}$, and trivially $5+3j/2 \le \lfloor \frac{2}{5}(12+4j)\rfloor$ for even $j\ge 2$. Hence, $\gamma_t(T) \le f(n)$.

Suppose now that $j$ is an odd number. By Claim~\ref{Claim}, even if $\overline{M}'$ is either $H_1$ or $H_2$, we can obtain a set $D$ of vertices in $\overline{M}'$ such that the size of $D$ is at most $f(n-4j-1)+1$, $D$ contains the vertex $u'_1$ and $D$ dominates all vertices of $\overline{M}'$ except possibly $u'_1$. Since $M_1$ is a triangulated polygon, $u'_1$ and one of its neighbors, say $v_1$, form a TDS of $M_1$. Thus, by adding to $D$ the vertex $v_1$ and the 3-vertex sets $\{v_{i-1},u'_i,v_i\}$, for $i=3,5,\ldots ,j$, we obtain a TDS  of size at most $f(n-4j-1)+1+1+\frac{3}{2}(j-1)$. By Lemma~\ref{lem:fn}, $f(n-4j-1)+1+1+\frac{3}{2}(j-1)\le f(n)$ because $\frac {2+3(j-1)/2}{4j+1}\le 2/5$, hence $\gamma_t(T) \le f(n)$. Note that $v_j$ is dominated by $u'_j$.

Finally, assume that $T'$ has only one interior vertex $v$, so $T'$ is a wheel. By removing the vertices in $M_1, \ldots , M_{k-1}$ not in $P$, the vertices $u'_2, \ldots ,u'_{k-1}$ and the vertex $v$, we obtain a near-triangulation $\overline{M}'$ that coincides with $M_k$, and we argue as in the previous paragraphs depending on the parity of $j$. We remark that $\overline{M}'$ can be neither $H_1$ nor $H_2$, and that if $j$ is an odd number and $\overline{M}'=M_k$ is a triangulated pentagon  (so Claim~\ref{Claim} cannot be applied), then we chose a TDS of size 2 including $u'_1$ in $\overline{M}'$.
\end{proof}

\section{Final remarks}\label{sec:con}

In this paper, we proved that the total domination number for any $n$-vertex near-triangulation is at most $\lfloor \frac {2n}{5} \rfloor$ with two exceptions. The proof is by induction and is based on a new decomposition of some near-triangulations (the irreducible ones) into several near-triangulations, using what we call terminal polygons.

We believe that this new technique of partitioning near-triangulations will be useful to address some classical problems on triangulations from a different point of view, providing new insights on these problems. In particular, we think that some of the ideas given in the paper might be helpful to give new upper bounds in triangulations for other variants on the concept of domination.

To finish this paper, we give the following conjecture.

\begin{conj}
For any triangulation $T$ of order $n\ge 6$, $\gamma_t(T) \le \lfloor \frac{n}{3} \rfloor$.
\end{conj}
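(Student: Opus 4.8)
The plan is to prove the bound by strong induction on $n$, using separating triangles to reduce to the $4$-connected case. If $T$ has a separating triangle $\Delta=(x,y,z)$, it splits $T$ into two triangulations $T_{\mathrm{in}}$ and $T_{\mathrm{out}}$ glued along $\Delta$, with orders $n_1,n_2$ satisfying $n_1+n_2=n+3$. Applying induction to each piece and taking the union of the two total dominating sets yields a TDS of $T$, because each vertex of $\Delta$ is totally dominated inside each piece. The difficulty is purely arithmetic: the naive union has size at most $\lfloor n_1/3\rfloor+\lfloor n_2/3\rfloor$, which overshoots $\lfloor n/3\rfloor=\lfloor (n_1+n_2)/3\rfloor-1$ by at most one, so one must recover a constant saving by forcing the two partial solutions to share a vertex (or edge) of $\Delta$. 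To make this systematic I would strengthen the induction hypothesis to assert the existence of a TDS of size $\le\lfloor n/3\rfloor$ that contains a prescribed edge of a chosen face, much as Claim~\ref{Claim} controls a prescribed boundary vertex; the two pieces can then be made to share an edge of $\Delta$ and the counts collapse correctly.

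First I would dispose of the small triangulations $6\le n\le N_0$ by direct inspection; the octahedron and the stacked triangulations already show that the bound is tight and is attained by a single dominating edge. I would then verify that the reductions never produce a piece of order below $6$, which is exactly where the hypothesis $n\ge 6$ is genuinely needed: for $n=4,5$ the bound already fails (e.g.\ $\gamma_t(K_4)=2>1$), so an undersized piece must be absorbed into its neighbour across $\Delta$ rather than treated on its own.

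The real content lies in the $4$-connected base case, where there are no separating triangles. Here I would invoke Whitney's theorem that every $4$-connected planar triangulation is Hamiltonian, fix a Hamiltonian cycle, and select a total dominating set whose induced components are edges or short paths, each dominating a block of at least six vertices in the spirit of the octahedral tight examples of Figure~\ref{fig:lowerbounds}. The feature to exploit is that the outer face is itself a triangle, which forbids the thin pentagon chains responsible for the weaker $2/5$ bound; this is what should push the ratio from $2/5$ down to $1/3$.

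I expect the $4$-connected case to be the main obstacle, and for a structural reason rather than a technical one. For ordinary domination Matheson and Tarjan obtain $\gamma\le n/3$ by partitioning the vertices into three dominating sets, but the analogous total statement is simply false: a vertex lying in a total dominating set must still be dominated by a \emph{neighbour}. Concretely, the triangulation obtained from $K_4$ by stacking a vertex into each of its four faces ($n=8$) admits no three pairwise disjoint total dominating sets, since the four degree-$3$ apices force the four original vertices into four distinct classes, which is impossible with three. Any proof must therefore abandon the clean colouring/partition paradigm and instead control the seams between the locally dominated blocks, guaranteeing that every vertex on the boundary between two removed chunks keeps a neighbour inside the chosen set, while never spending more than one set-vertex per three graph-vertices. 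Designing an induction, or a discharging scheme on the vertex degrees, that maintains this seam condition at the exact $n/3$ ratio is, I believe, the crux of the conjecture.
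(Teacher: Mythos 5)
You are attempting to prove a statement that the paper itself does not prove: it is stated as a conjecture in the final remarks and left open, so there is no proof in the paper to compare against --- the paper only supplies evidence of tightness (the nested-octahedra triangulations of Figure~\ref{fig:tight}). Read as a proof, your proposal has two genuine gaps. First, the separating-triangle reduction rests on a strengthened induction hypothesis --- a TDS of size at most $\lfloor n/3\rfloor$ containing a \emph{prescribed} edge (or vertex) of a chosen face --- which you never establish and which is strictly stronger than the conjecture itself. The paper's analogous device, Claim~\ref{Claim}, controls only one prescribed boundary vertex at a cost of $+1$ over $f(n-1)$, and it works only because the $2/5$ ratio, via Lemma~\ref{lem:fn}, leaves slack to absorb additive constants; at the exact ratio $1/3$ there is no slack at all, and in the tight nested-octahedra examples every octahedron consumes exactly its budget of two vertices, so a prescription argument would have to be verified there with no room for error. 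Your treatment of undersized pieces is similarly only a declaration: a stacked vertex produces $T_{\mathrm{in}}=K_4$ with $n_1=4$, where the bound fails, and you say such a piece ``must be absorbed into its neighbour'' without giving any absorption mechanism. For stacked (Apollonian) triangulations \emph{every} separating triangle yields such pieces, so this is the generic case of your induction, not a degenerate one.

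Second, as you yourself concede, the $4$-connected base case is missing entirely, and Whitney Hamiltonicity alone cannot carry it: the total domination number of the cycle $C_n$ is roughly $n/2$, well above $n/3$, so the chords must do all the work, and ``blocks of at least six vertices in the spirit of the octahedral examples'' is a hope rather than a construction. Your negative observation --- that the $n=8$ triangulation obtained by stacking into each face of $K_4$ admits no partition into three total dominating sets, ruling out a Matheson--Tarjan colouring paradigm --- is correct and genuinely informative, but it identifies an obstruction without overcoming it. In short, your proposal is a plausible research plan whose two pillars (the prescribed-edge strengthening and the $4$-connected case) are both unproven; the statement remains exactly the open conjecture the paper declares it to be.
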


The conjecture is based on the following. The bound $\lfloor \frac {2n}{5} \rfloor$ on the total domination number in near-triangulations is tight, since there are near-triangulations achieving the bound. Figure~\ref{fig:tightMOP} shows one of these near-triangulations. However, all the examples reaching the bound that we know are MOPs. For triangulations, we feel that the total domination number should be smaller and close to $n/3$. This bound would be tight because there are triangulations reaching it. Figure~\ref{fig:tight} shows one of them. It consists of an octahedron containing in its interior other $k-1$ octahedra. The inter-octahedra region can be triangulated in any way. It is not difficult to see that any TDS for this triangulation must contain at least two vertices of each octahedron.

\section*{Acknowledgments}

A. Garc\'\i a, M. Mora and J. Tejel are supported by H2020-MSCA-RISE project 734922 - CONNECT; M. Claverol, A. Garc\'\i a, G. Hern\'andez, C. Hernando, M. Mora and J. Tejel are supported by project MTM2015-63791-R (MINECO/FEDER);  M. Claverol, C. Hernando, M. Mora and J. Tejel are supported by project PID2019-104129GB-I00 of the Spanish Ministry of Science and Innovation; M. Claverol is supported by project Gen. Cat. DGR 2017SGR1640; C. Hernando, M. Maureso and M. Mora are supported by project Gen. Cat. DGR 2017SGR1336; A. Garc\'\i a and J. Tejel are supported by project Gobierno de Arag\'on E41-17R (FEDER).


\bibliographystyle{abbrv}
\bibliography{bibliography}

\end{document}